\title{Edge-to-edge Tilings of the Sphere by Angle Congruent Pentagons}
\author{Hoiping Luk, Min Yan\thanks{Research was supported by Hong Kong RGC General Research Fund 16303515.} \\ 
Hong Kong University of Science and Technology}
\DeclareMathOperator{\opt}{\, |\,}
\newcommand{\ssum}{\textstyle \sum}
\newtheorem{theorem}{Theorem}
\newtheorem{lemma}[theorem]{Lemma}
\newtheorem{proposition}[theorem]{Proposition}
\theoremstyle{definition}
\newtheorem*{definition*}{Definition}
\newtheorem*{case*}{Case}
\newtheorem*{example*}{Example}
\numberwithin{equation}{section}
\begin{document}

\maketitle

\begin{abstract}
We develop a systematic method for computing the angle combinations at all vertices in an edge-to-edge tiling of the sphere by pentagons with the same five angles. The method is a useful and necessary step in many tiling problems about pentagonal tilings of the sphere. As an application, we find all edge-to-edge tilings of the sphere by angle congruent pentagons, that allow free and continuous choice of two angle values. 
\end{abstract}

\medskip

{\bf Keywords}: Spherical tiling, Pentagon, Angle congruence, Angle combination

\section{Introduction}

Two polygons are {\em angle congruent} if there is a one-to-one correspondence between the edges, such that adjacent edges correspond to adjacent edges, and the angles between corresponding adjacent edges are equal. Similarly, the two polygons are {\em edge congruent} if, instead of the angles, the lengths of the corresponding edges are equal. Two polygons are {\em congruent} if they are angle and edge congruent. 

We recently completed the classification of edge-to-edge tilings of the sphere by congruent pentagons \cite{ay,awy,cly5,gsy,ua,wy1,wy2,wy3}, or by congruent quadrilaterals \cite{cly4}. For triangular tilings, the classification was started by Sommerville \cite{so} a century ago, and was completed by Ueno and Agaoka \cite{ua} in 2002. Therefore the whole classification of edge-to-edge tilings of the sphere by congruent polygons is complete.

The classification work assumes the tiles are angle and edge congruent. The goal of this paper is to see how far we can go by assuming only the angle congruence. In fact, all the results in this paper do not even require the edges to be straight (i.e., great arcs on the sphere). Moreover, most results in this paper only assume all pentagonal tiles have the same five angles, and do not even require the angles to be arranged in the same way (as in angle congruence). The only information we use is the sphere, the pentagon, and that the angles at any vertex should add up to $2\pi$. 

We develop a systematic method for obtaining the numerical information about the angle combinations in the pentagon and at all vertices, in edge-to-edge tilings of the sphere by angle congruent pentagons. The method was already used in our earlier classification work. As an application, we classify all angle congruent tilings, such that two angles can be freely and continuously chosen. 

In Sections 5 and 6 of \cite{gsy}, Gao, Shi, Yan described the angle combinations for the dodecahedron tiling, which has minimal number of $12$ pentagonal tiles. Moreover, we know that any non-minimal pentagonal tiling has $f\ge 16$ tiles \cite{yan}. Therefore we will always assume $f\ge 16$ in this paper. Moreover, since we only consider tilings that are given by naturally embedded graphs, we will only consider edge-to-edge tilings with all vertices having degree $\ge 3$.

Our results are presented as what we call AVCs ({\em anglewise vertex combinations}, first introduced in \cite{gsy}). A typical example is 
\begin{align*}
\{
84\alpha^2\beta\gamma\delta &\colon 
(72-y_1)\alpha\beta\gamma,\,
32\alpha^3,\,
(12+y_1)\beta\delta^3 \opt
y_1\alpha\gamma^3,\,
(12-y_1)\gamma^2\delta^3
\}, \\
&
\alpha=\tfrac{2}{3}\pi,\,
\beta=\tfrac{6}{7}\pi,\,
\gamma=\tfrac{10}{21}\pi,\,
\delta=\tfrac{8}{21}\pi.
\end{align*}
Here we have $84$ pentagonal tiles. The notation $\alpha^2\beta\gamma\delta$ means the angles in the pentagon are $\alpha,\alpha,\beta,\gamma,\delta$, with the given values. There are five possible angle combinations $\alpha\beta\gamma$, $\alpha^3$, $\beta\delta^3$, $\alpha\gamma^3$, $\gamma^2\delta^3$ at the vertices. The coefficients $72-y_1$, $32$, $12+y_1$, $y_1$, $12-y_1$ are the numbers of the respective vertices. The divider $\opt$ separates the five combinations into two groups. The three combinations before the divider are {\em necessary} in the sense that they must appear in the tiling, i.e., the coefficients are positive. The two combinations after the divider are {\em optional} in the sense that these are the only vertices that may appear in addition to the necessary three, i.e., the coefficients are non-negative. We may choose $y_1$ to be any integer, such that all the coefficients in the necessary part are positive and all the coefficients in the optional part are non-negative. The condition means exactly $0\le y_1\le 12$, and we have total of $13$ choices.

Our derivation of an AVC starts from its degree $3$ part. This part is provided by Table \ref{deg3AVC}, and is denoted $\text{AVC}_3$. For the AVC above, we have $\text{AVC}_3=\{\alpha^2\beta\gamma\delta \colon \alpha\beta\gamma,\,\alpha^3\}$. By \cite[Lemma 8]{wy1}, there is at most one angle not appearing at degree $3$ vertices, and the angle must appear at a vertex of degree $4$ or $5$ in specific way. In the specific example, the angle $\delta$ appears at $\beta\delta^3$. Then we construct the {\em full} AVC by assuming $\alpha\beta\gamma,\alpha^3,\beta\delta^3$ are vertices. We indicate the starting point of the construction by writing $\text{AVC}\supset\{\alpha\beta\gamma,\alpha^3,\beta\delta^3\}$.

In Section \ref{high}, we first present some basic results about angles in pentagonal tilings. In particular, for up to three distinct angles, Proposition \ref{3complete} provides the initial $\text{AVC}_3$s consisting of the angle combinations in the pentagon and the possible degree $3$ vertices, and serve as the starting point for the calculation of full AVCs.

In Sections \ref{routine1}, \ref{routine2} and \ref{routine3}, we describe the method for deriving the full AVCs from the initial ones provided in Section \ref{high}. The method in Sections \ref{routine1} and \ref{routine2} are rather routine. In Section \ref{routine3}, we give four examples not covered by the two routines. 

In Section \ref{classify}, we list all the full AVCs with up to three distinct angles at degree $3$ vertices. Moreover, we outline how to get all the full AVCs with four distinct angles at degree $3$ vertices. We also explore the AVCs with five distinct angles at degree $3$ vertices, where the methods in Sections \ref{routine1}, \ref{routine2}, \ref{routine3} may not be sufficient.

In Section \ref{realize}, we study the realizations of the AVCs by actual angle congruent. We concentrate only on those AVCs allowing free and continuous choice of two angles. The list in Section \ref{classify} shows that, beyond the minimal $f=12$, there are only three such AVCs, with respective $f=24,36,60$. Then in Theorem \ref{anglerealize}, we show that only the AVCs with $f=24,60$ can be realized. In fact, the realizations are the {\em pentagonal subdivisions} of Platonic solids, in \cite{wy1,wy3,yan2}. We further discuss what the result means in terms of tilings by (angle and edge) congruent pentagons, including the possibility that the edges are not straight.

\section{Angles in Pentagonal Tiling of the Sphere}
\label{high}

In this paper, a {\em tiling} is an edge-to-edge tiling of the sphere by pentagons with the same five angles, such that all vertices have degree $\ge 3$, and the number of tiles $f\ge 16$. The reason for $f\ge 16$ will be explained below.

Let $v,e,f$ be the numbers of vertices, edges, and tiles. Let $v_k$ be the number of vertices of degree $k$. We have
\begin{align*}
2
&=v-e+f, \\
2e=5f
&=\ssum_{k=3}^{\infty}kv_k=3v_3+4v_4+5v_5+\cdots, \\
v
&=\ssum_{k=3}^{\infty}v_k=v_3+v_4+v_5+\cdots.
\end{align*}
Then it is easy to derive $2v=3f+4$ and  
\begin{align}
\tfrac{f}{2}-6
&=\ssum_{k\ge 4}(k-3)v_k=v_4+2v_5+3v_6+\cdots, \label{vcountf} \\
v_3
&=20+\ssum_{k\ge 4}(3k-10)v_k=20+2v_4+5v_5+8v_6+\cdots. \label{vcountv}
\end{align}

The equality \eqref{vcountv} shows that most vertices have degree $3$. We call vertices of degree $>3$ {\em high degree} vertices.

By \eqref{vcountf}, $f$ is an even integer $\ge 12$. Edge-to-edge tilings of the sphere by $12$ congruent pentagons have been classified \cite{ay, gsy}. Moreover, Gao, Shi and Yan \cite{gsy} also classified most tilings by $12$ {\em angle congruent} pentagons. The only exception is tilings by  pentagon with angles $\alpha,\alpha,\alpha,\beta,\gamma$, for which the number of angle congruent tilings is too many to be listed. By \cite{yan}, we also know $f\ne 14$. Therefore we will assume that $f$ is even and $f\ge 16$ throughout this paper.

The sum of all angles at a vertex (called {\em angle sum} of the vertex) is $2\pi$. The following is the {\em angle sum for pentagon}, from \cite[Lemma 4]{wy1}.

\begin{lemma}\label{anglesum}
For a tiling by $f$ pentagons with the same five angles, the sum of five angles in the pentagon is $(3 + \frac{4}{f})\pi$.
\end{lemma}

Since most vertices have degree $3$, the appearance (and disappearance) of an angle at all degree $3$ vertices has implication on how the angle appears in the pentagon. The following two lemmas are \cite[Lemmas 6, 7, 8]{wy1}.

\begin{lemma}\label{more3}
In a tiling by pentagons with the same five angles,
\begin{itemize}
\item If an angle appears at every degree $3$ vertex, then the angle appears at least twice in the pentagon. 
\item If an angle appears at least twice at every degree 3 vertex, then the angle appears at least three times in the pentagon.
\end{itemize}
\end{lemma}

The second statement also applies to two angles together appearing twice at every degree $3$ vertex. Then the two angles together appear at least three times in the pentagon.

\begin{lemma}\label{hdeg}
Suppose in a tiling by pentagons with the same five angles, an angle $\theta$ does not appear at degree $3$ vertices.
\begin{enumerate}
\item There can be at most one such angle $\theta$.
\item The angle $\theta$ appears only once in the pentagon.
\item One of $\alpha\theta^3$, $\theta^4$, $\theta^5$ is a vertex, where $\alpha\ne\theta$.
\end{enumerate}
\end{lemma}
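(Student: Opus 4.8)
The plan is to prove all four parts by one global double‑counting device. Write $m$ for the number of times $\theta$ occurs in the pentagon. Since the tiling is angle congruent, $\theta$ occupies exactly $mf$ angle slots in the whole tiling, and by hypothesis none of these slots sits at a degree‑$3$ vertex. Hence the $\theta$‑slots must fit among the slots at high‑degree vertices, giving the single inequality $mf \le \sum_{k\ge4}kv_k$. I would then substitute the form of $f$ read off from the vertex counting equation \eqref{vcount}, namely $f = 12 + 2\sum_{k\ge4}(k-3)v_k$, to turn each assertion into a linear inequality in the $v_k$ whose sign can be checked coefficient by coefficient.

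First, for Part (2) I would run this with a general $m\ge1$. The inequality becomes $12m \le \sum_{k\ge4}[(1-2m)k+6m]v_k$. For $m\ge2$ the coefficient $(1-2m)k+6m$ already equals $4-2m\le0$ at $k=4$ and only decreases as $k$ grows (its $k$‑coefficient $1-2m$ is negative), so the right‑hand side is nonpositive while the left is positive, a contradiction; hence $m=1$. Part (1) is the same arithmetic read differently: two distinct angles missing from degree‑$3$ vertices together occupy at least $2f$ slots, giving $2f \le \sum_{k\ge4}kv_k$, which is precisely the impossible $m=2$ case. For Part (3) I set $m=1$ in the surviving inequality to obtain $12 \le \sum_{k\ge4}(6-k)v_k$, and then discard the nonpositive terms ($k\ge6$) to conclude $12 \le 2v_4+v_5$.

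Part (4) I would prove by contradiction, assuming none of $\alpha\theta^3$, $\theta^4$, $\theta^5$ is a vertex. The point is that this assumption caps the multiplicity of $\theta$ at low‑degree vertices: at a degree‑$4$ vertex $\theta$ cannot occur three times (that would be $\alpha\theta^3$ with $\alpha\neq\theta$) or four times ($\theta^4$), so it occurs at most twice; at a degree‑$5$ vertex it cannot occur five times ($\theta^5$), so at most four times; at degree $k\ge6$ the trivial bound $k$ applies. Summing, the total number $f$ of $\theta$‑slots satisfies $f \le 2v_4 + 4v_5 + \sum_{k\ge6}kv_k$. Substituting $f = 12 + 2\sum_{k\ge4}(k-3)v_k$ cancels the $v_4$ and $v_5$ contributions and leaves $12 \le \sum_{k\ge6}(6-k)v_k$, whose right side is nonpositive, the desired contradiction.

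The one step that needs care, and which I expect to be the main (if modest) obstacle, is the bookkeeping in Part (4): one must verify that the three excluded vertex types are exactly what is needed to force the multiplicity caps, in particular that a degree‑$4$ vertex carrying three $\theta$'s is necessarily $\alpha\theta^3$ with a single non‑$\theta$ angle, and that excluding $\theta^5$ kills only the all‑$\theta$ degree‑$5$ vertex while still permitting $\theta$ up to multiplicity four there. Once the caps $2v_4$, $4v_5$, $kv_k$ are correctly identified, all four parts collapse to checking that a linear form $\sum_k c_k v_k$ with every $c_k\le0$ cannot exceed a fixed positive constant, so the essential content lies entirely in choosing the right slot count and invoking \eqref{vcount}.
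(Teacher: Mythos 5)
Your proposal is correct and follows essentially the same argument as the paper: in each part you bound the number of $\theta$-slots by the total $\sum_{k\ge 4}kv_k$ of angle slots at high degree vertices (with the refined caps $2v_4+4v_5+\sum_{k\ge 6}kv_k$ for part (4)) and derive a contradiction from the vertex counting equation \eqref{vcount}. Your only departure is cosmetic --- packaging parts (1)--(3) into one inequality parametrized by the multiplicity $m$ --- and your bookkeeping for the multiplicity caps in part (4) matches the paper's.
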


The following is the {\em counting lemma} \cite[Lemma 5]{cly5}. It actually applies to more general tilings (not just by pentagons, see \cite[Lemma 4]{cly4}). The idea behind the result is central to Rao's classification of convex pentagons that can tile the plane \cite{rao}. 

\begin{lemma}\label{counting}
Suppose $\theta,\rho$ appear the same number of times in the pentagon. If all vertices are of the form $\theta^k\rho^l\cdots$, with $k\le l$ and no $\theta,\rho$ in the remainder, then all vertices are of the form $\theta^k\rho^k\cdots$, with no $\theta,\rho$ in the remainder. 
\end{lemma}

We remark that Lemmas \ref{anglesum}, \ref{more3}, \ref{hdeg}, \ref{counting} do not assume all tiles have the same angle arrangement, i.e., angle congruent.

The following is \cite[Lemma 1]{awy}, and is the starting point of the calculations in this paper.

\begin{lemma}\label{deg3v}
If an edge-to-edge tiling of a surface has at most five distinct angles at degree $3$ vertices, then after suitable relabelling of the distinct angles, the anglewise vertex combinations at degree $3$ vertices is in Table \ref{deg3AVC}.
\end{lemma}

\begin{table}[h]
\centering
\begin{tabular}{|c|c|c|c|}
\hline 
\multicolumn{3}{|c|}{Necessary} & Optional  \\
\hline 
\hline  
$\alpha^3$ & \multicolumn{1}{r}{} & & \\
\hline 
\hline  
$\alpha\beta^2$ & \multicolumn{1}{r}{} & & \\
\hline 
\hline  
\multicolumn{2}{|c|}{$\alpha\beta\gamma$} & & $\alpha^3$ \\
\cline{1-2} \cline{4-4} 
\multirow{2}{*}{$\alpha\beta^2$} & $\alpha^2\gamma$ & & \\
\cline{2-2}  
 & $\gamma^3$ & & \\
\hline 
\hline  
\multicolumn{2}{|c|}{\multirow{5}{*}{$\alpha\beta\gamma$}}& \multirow{2}{*}{$\alpha\delta^2$}  &  $\beta^2\delta$ \\
\cline{4-4}
\multicolumn{2}{|c|}{} &&  $\beta^3$  \\
\cline{3-4} 
\multicolumn{2}{|c|}{} & \multirow{2}{*}{$\alpha^2\delta$}  & $\beta\delta^2$ \\
\cline{4-4}
\multicolumn{2}{|c|}{} &&  $\beta^3$ \\
\cline{3-4} 
\multicolumn{2}{|c|}{} & $\delta^3$ &  \\
\hline 
\multirow{2}{*}{$\alpha\beta^2$} & \multicolumn{2}{|c|}{$\gamma\delta^2$}  & $\alpha^2\delta$ \\
\cline{2-4} 
 & \multicolumn{2}{|c|}{$\alpha^2\gamma,\delta^3$} &  \\
\hline
\hline 
\multirow{5}{*}{$\alpha\beta\gamma$} & 
\multicolumn{2}{|c|}{\multirow{5}{*}{$\alpha\delta\epsilon$}}
& $\beta\delta^2,\beta^2\epsilon$ \\
\cline{4-4}
& \multicolumn{2}{|c|}{} &  $\beta\delta^2,\gamma\epsilon^2,\alpha^3$ \\
\cline{4-4}
& \multicolumn{2}{|c|}{} &  $\beta\delta^2,\gamma^2\epsilon$ \\
\cline{4-4}
& \multicolumn{2}{|c|}{} &  $\beta\delta^2,\gamma^3$ \\
\cline{4-4}
& \multicolumn{2}{|c|}{} &  $\beta\delta^2,\epsilon^3$ \\
\hline
\end{tabular}
\quad
\begin{tabular}{|c|c|c|c|}
\hline 
\multicolumn{3}{|c|}{Necessary} & Optional  \\
\hline 
\hline 
\multirow{19}{*}{$\alpha\beta\gamma$} 
& \multirow{12}{*}{$\alpha\delta^2$} 
& \multirow{3}{*}{$\alpha^2\epsilon$} 
& $\beta\epsilon^2$ \\
\cline{4-4}
&&& $\beta^2\delta$ \\
\cline{4-4}
&&& $\beta^3$ \\ 
\cline{3-4} 
&& \multirow{3}{*}{$\beta\epsilon^2$} 
& $\alpha^2\epsilon$ \\
\cline{4-4}
&&& $\gamma^2\delta$ \\
\cline{4-4}
&&& $\gamma^3$ \\
\cline{3-4} 
&& \multirow{4}{*}{$\beta^2\epsilon$}  
& $\gamma\epsilon^2$ \\
\cline{4-4}
&&& $\gamma^2\delta$ \\
\cline{4-4}
&&& $\gamma^3$ \\
\cline{4-4}
&&& $\delta\epsilon^2$ \\
\cline{3-4} 
&& \multirow{2}{*}{$\delta\epsilon^2$} 
& $\beta^2\epsilon$ \\
\cline{4-4}
&&& $\beta^3$ \\
\cline{3-4} 
&& $\epsilon^3$ & $\beta^2\delta$ \\
\cline{2-4} 
& \multirow{6}{*}{$\alpha^2\delta$}
& \multirow{3}{*}{$\beta^2\epsilon$} 
& $\alpha\epsilon^2$ \\
\cline{4-4}
&&& $\gamma\delta^2$ \\
\cline{4-4}
&&& $\gamma^3$ \\
\cline{3-4} 
&& \multirow{2}{*}{$\delta^2\epsilon$} 
& $\beta\epsilon^2$ \\
\cline{4-4}
&&& $\beta^3$ \\
\cline{3-4} 
&& $\epsilon^3$ & $\beta\delta^2$ \\
\cline{2-4} 
& \multicolumn{2}{|c|}{$\delta\epsilon^2$} &   $\alpha^3$ \\
\hline 
\multicolumn{2}{|c|}{\multirow{3}{*}{$\alpha\beta^2,\gamma\delta^2$}}
& \multirow{2}{*}{$\alpha^2\epsilon$} &  $\beta\gamma^2$ \\
\cline{4-4} 
\multicolumn{2}{|c|}{} &&  $\delta\epsilon^2$ \\
\cline{3-4} 
\multicolumn{2}{|c|}{} & $\epsilon^3$ &  $\alpha^2\delta$ \\
\hline 
\end{tabular}
\caption{$\text{AVC}_3$ for up to five distinct angles.}
\label{deg3AVC}
\end{table}

We remark that Lemma \ref{deg3v} is about general tilings of any surface, and does not assume pentagon or sphere. We may further use Lemmas \ref{more3} and \ref{hdeg} to find possible combinations of angles in the pentagon as well as at degree $3$ vertices. The following gives the complete answer for up to three distinct angles at degree $3$ vertices.

\begin{proposition}\label{3complete}
For tilings of the sphere by more than $12$ pentagons with the same five angles, with up to three distinct angles at degree $3$ vertices, the angle combination in the pentagon and at all degree $3$ vertices are, up to relabelling of angles, given by Table \ref{3completeAVC}.
\end{proposition}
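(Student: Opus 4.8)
The plan is to start from the degree-$3$ AVCs in Theorem \ref{deg3classify} that involve at most three distinct angles, namely $\{\alpha^3\}$, $\{\alpha\beta^2\}$, $\{\alpha\beta\gamma\opt\alpha^3\}$, $\{\alpha\beta^2,\alpha^2\gamma\}$ and $\{\alpha\beta^2,\gamma^3\}$, and for each to determine all consistent assignments of the five pentagon angles. Since every angle occurring at a degree $3$ vertex is necessarily an angle of the pentagon, the distinct angles of the pentagon are exactly the degree $3$ angles, possibly together with one extra angle $\theta$ that never occurs at a degree $3$ vertex. By Lemma \ref{hdeg}(1) there is at most one such $\theta$, and by Lemma \ref{hdeg}(2) it occurs exactly once in the pentagon. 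I would therefore split each AVC into two scenarios according to whether such a $\theta$ is present, and in each scenario simultaneously decide which of the optional degree $3$ vertices actually occur.

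First I would pin down the multiplicities of the angles in the pentagon. The governing constraints are: every degree $3$ angle occurs at least once; by Lemma \ref{more3}, any angle appearing at \emph{every} degree $3$ vertex must occur at least twice; and the optional $\theta$, if present, occurs exactly once. For $\{\alpha^3\}$ this leaves only $\alpha^5$ and $\alpha^4\theta$; for $\{\alpha\beta^2\}$ both $\alpha$ and $\beta$ sit at every degree $3$ vertex, so each occurs at least twice, leaving $\alpha^3\beta^2$, $\alpha^2\beta^3$ and $\alpha^2\beta^2\theta$; and similarly the three-angle AVCs yield short finite lists of multisets. Here a useful reduction appears in $\{\alpha\beta\gamma\opt\alpha^3\}$: if the optional $\alpha^3$ were absent, then $\alpha,\beta,\gamma$ would each occur at every degree $3$ vertex and hence at least twice, which is impossible in five slots, so $\alpha^3$ must in fact be a vertex. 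By contrast, in $\{\alpha\beta^2,\gamma^3\}$ no angle appears at every degree $3$ vertex, so Lemma \ref{more3} gives no lower bound and the full list of partitions of $5$ (or of $4$, when $\theta$ is present) must be enumerated.

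With the multiset fixed, I would solve the resulting linear system. Each degree $3$ vertex contributes an angle sum equation equal to $2\pi$ (for example $\alpha+2\beta=2\pi$, or $3\gamma=2\pi$ forcing $\gamma=\tfrac23\pi$), while the five pentagon angles sum to $3\pi+\tfrac{4\pi}{f}$ by Lemma \ref{anglesum}. Together these express every angle as an explicit function of $f$. The standing assumption $f\ge16$ then eliminates the degenerate solutions and those forcing distinct angles to coincide; for instance $\alpha^5$ with $\alpha^3$ a vertex forces $\alpha=\tfrac23\pi$ and hence $f=12$, so $\alpha^4\theta$ is the only survivor of the first AVC.

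Finally, in each scenario that still carries an extra angle $\theta$, I would invoke Lemma \ref{hdeg}(4) to conclude that one of $\alpha'\theta^3$, $\theta^4$, $\theta^5$ is a vertex, and use the already computed angle values, together with $2v_4+v_5\ge12$ from Lemma \ref{hdeg}(3), to test whether such a high-degree vertex can have angle sum $2\pi$; incompatible branches are discarded. Collecting the surviving cases and relabeling gives Table \ref{3completeAVC}. I expect the main difficulty to lie in the $\{\alpha\beta^2,\gamma^3\}$ branch and in the extra-angle scenarios: the former because the absence of any multiplicity lower bound from Lemma \ref{more3} leaves many candidate multisets to be solved and checked one by one, and the latter because one must both satisfy the angle system and exhibit (or rule out) a consistent high-degree vertex housing $\theta$.
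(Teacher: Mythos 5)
Your overall strategy matches the paper's: run through the five degree-$3$ AVCs of Theorem \ref{deg3classify}, split according to whether an extra angle $\theta$ absent from degree $3$ vertices is present (Lemma \ref{hdeg}), use Lemma \ref{more3} to force multiplicity $\ge 2$ for any angle at every degree $3$ vertex, and finish with angle sums and Lemma \ref{hdeg}(4). The observations that $\alpha^3$ must actually appear in $\{\alpha\beta\gamma\opt\alpha^3\}$ and that $\alpha^5$ forces $f=12$ are exactly the paper's. However, there is a genuine gap in the $\{\alpha\beta^2,\gamma^3\}$ branch. You propose to enumerate all multisets and discard those whose angle system is incompatible with $f\ge 16$, but this filter is not strong enough: for the pentagon $\alpha^3\beta\gamma$ with vertices $\alpha\beta^2$ and $\gamma^3$, the system gives $\gamma=\tfrac{2}{3}\pi$, $\beta=\tfrac{11}{15}\pi-\tfrac{4\pi}{5f}$, $\alpha=\tfrac{8}{15}\pi+\tfrac{8\pi}{5f}$, which is positive and has all angles distinct for every $f\ge 16$ (coincidences occur only at $f=12$). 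The same happens for $\alpha^2\beta\gamma\delta$ in the extra-angle scenario. Yet neither appears in Table \ref{3completeAVC}.

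What is missing is a counting argument, not an angle-sum argument: the paper proves that for $\{\alpha\beta^2,\gamma^3\}$ the angles $\beta$ and $\gamma$ must \emph{together} appear at least three times in the pentagon. The proof mimics that of Lemma \ref{more3} but for the pair $\{\beta,\gamma\}$: if they appeared only once each, the non-$(\beta,\gamma)$ angles would total $3f$ while $\beta,\gamma$ cover at least two of the three slots at every degree $3$ vertex, giving $3v_3\le 3f\le v_3+\sum_{k\ge 4}kv_k$, which contradicts the vertex counting equation \eqref{vcount}. This is precisely what eliminates $\alpha^3\beta\gamma$ and $\alpha^2\beta\gamma\delta$. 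Since Lemma \ref{more3} as stated applies only to a single angle at every degree $3$ vertex, you cannot invoke it here; you need this separate pairwise version (or an equivalent counting estimate) to obtain the table as stated.
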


\begin{table}[h]
\centering
\begin{tabular}{|c|c|}
\hline 
$\text{AVC}_3$ & angle combination of pentagon \\
\hline \hline 
$\alpha^3$ &
$\alpha^4\beta$ \\
\hline 
$\alpha\beta^2$ &
$\alpha^2\beta^3$ \\
\hline 
\multirow{2}{*}{$\alpha\beta\gamma,\alpha^3$} &
$\alpha^2\beta^2\gamma$ \\
\cline{2-2}
& $\alpha^2\beta\gamma\delta$  \\
\hline 
\multirow{2}{*}{$\alpha\beta^2,\alpha^2\gamma$} &
$\alpha^3\beta\gamma,\alpha^2\beta^2\gamma,\alpha^2\beta\gamma^2$ \\
\cline{2-2}
& $\alpha^2\beta\gamma\delta$ \\
\hline 
\multirow{2}{*}{$\alpha\beta^2,\gamma^3$} &
$\alpha^2\beta^2\gamma,\alpha^2\beta\gamma^2,\alpha\beta^3\gamma,\alpha\beta\gamma^3$ \\
\cline{2-2}
& $\alpha\beta^2\gamma\delta,\alpha\beta\gamma^2\delta$ \\
\hline
\end{tabular}
\caption{Angles in the pentagon and at degree $3$ vertices, for up to three distinct angles at degree $3$ vertices.}
\label{3completeAVC}
\end{table}

The left column of the table is all the possible $\text{AVC}_3$s, i.e., sets of angle combinations at degree $3$ vertices. The right column is split into the case of all angles appearing at degree $3$ vertices, and the case one angle not appearing at degree $3$ vertices. For example, if $\alpha\beta\gamma,\alpha^3$ are all the degree $3$ vertices, then the AVC contains one of the following
\begin{align*}
&\{\alpha^2\beta^2\gamma\colon \alpha\beta\gamma,\alpha^3\}, &
&\{\alpha^2\beta\gamma\delta \colon \alpha\beta\gamma,\alpha^3,\alpha\delta^3\}, &
&\{\alpha^2\beta\gamma\delta \colon \alpha\beta\gamma,\alpha^3,\beta\delta^3\}, \\
&\{\alpha^2\beta\gamma\delta \colon \alpha\beta\gamma,\alpha^3,\delta^4\}, &
&\{\alpha^2\beta\gamma\delta \colon \alpha\beta\gamma,\alpha^3,\delta^5\}. &&
\end{align*}
Lemma \ref{hdeg} gives the necessary vertices $\alpha\delta^3,\beta\delta^3,\delta^4,\delta^5$ in four of five cases. In fact, the lemma also gives $\gamma\delta^3$. The case becomes the third one after the symmetry of exchanging $\beta$ and $\gamma$.

\begin{proof}
The proof follows the five $\text{AVC}_3$ in Table \ref{deg3AVC} for up to three distinct angles. We need to consider two cases:
\begin{enumerate}
\item All angles appear at degree $3$ vertices.
\item One angle does not appear at degree $3$ vertices.
\end{enumerate}

Consider $\{\alpha^3\}$ from the first row of Table \ref{deg3AVC}. In the first case, the pentagon must be $\alpha^5$. The angle sum of $\alpha^3$ and the angle sum for pentagon (Lemma \ref{anglesum}) imply
\[
3\alpha=2\pi,\quad
5\alpha=(3+\tfrac{4}{f})\pi.
\]
Then we get $f=12$, which can be dismissed because we only consider $f\ge 16$. 

In the second case, let $\beta$ be the extra angle appearing only at high degree vertices. By Lemma \ref{hdeg}, we have $\{\alpha^3,\alpha\beta^3\},\{\alpha^3,\beta^4\},\{\alpha^3,\beta^5\}$. The angle sums of the two vertices imply $\alpha=\frac{2}{3}\pi$ and $\beta=\frac{4}{9}\pi,\frac{1}{2}\pi,\frac{2}{5}\pi$. Then we consider the angle combinations $\alpha^k\beta^{5-k}$ for the pentagon, such that the angle sum for pentagon  
\[
k\alpha+(5-k)\beta=(3+\tfrac{4}{f})\pi
\]
yields even $f\ge 16$. The only possibility is $\alpha^4\beta$, which corresponds to $f=36,24,60$ for the three choices of $\beta$.

Consider $\{\alpha\beta^2\}$ from the second row of Table \ref{deg3AVC}. Since $\alpha$ appears at every degree $3$ vertex, and $\beta$ appears twice at every degree $3$ vertex, by Lemma \ref{more3}, we know $\alpha$ appears at least twice in the pentagon, and $\beta$ appears at least three times in the pentagon. Therefore the pentagon has the angle combination $\alpha^2\beta^3$.

Next we assume three angles $\alpha,\beta,\gamma$ appear at degree $3$ vertices. In the second case, we denote by $\delta$ the extra angle not appearing at degree $3$ vertices.

Consider $\{\alpha\beta\gamma \opt \alpha^3\}$ from Table \ref{deg3AVC}. If $\alpha\beta\gamma$ is the only degree $3$ vertex, then by Lemma \ref{more3}, we know each of $\alpha,\beta,\gamma$ appears twice in the pentagon, a contradiction. Therefore $\alpha^3$ is necessarily a vertex, and $\text{AVC}_3=\{\alpha\beta\gamma, \alpha^3\}$. Then $\alpha$ appears at every degree $3$ vertex. By Lemma \ref{more3}, we know $\alpha$ appears at least twice in the pentagon. In the first case, up to the symmetry of exchanging $\beta$ and $\gamma$, this means that the pentagon is $\alpha^3\beta\gamma$ or $\alpha^2\beta^2\gamma$. In the second case, the pentagon is $\alpha^2\beta\gamma\delta$. However, the angle sums of $\alpha\beta\gamma, \alpha^3$ and the angle sum for the pentagon $\alpha^3\beta\gamma$ imply 
\[
(3+\tfrac{4}{f})\pi
=3\alpha+\beta+\gamma
=\tfrac{2}{3}\cdot 3\alpha+(\alpha+\beta+\gamma)
=(\tfrac{2}{3}+2)\pi.
\]
Then we get $f=12$, and the case is dismissed. 

Consider $\{\alpha\beta^2,\alpha^2\gamma\}$ from Table \ref{deg3AVC}. Since $\alpha$ appears at every degree $3$ vertex, by Lemma \ref{more3}, we know $\alpha$ appears at least twice in the pentagon. In the first case, this means that the pentagon is $\alpha^3\beta\gamma,\alpha^2\beta^2\gamma,\alpha^2\beta\gamma^2$. In the second case, the pentagon is $\alpha^2\beta\gamma\delta$.

Consider $\{\alpha\beta^2,\gamma^3\}$ from Table \ref{deg3AVC}. By Lemma \ref{more3}, we know $\beta,\gamma$ together must appear at least three times in the pentagon. In the first case, this means the pentagon is $\alpha^2\beta^2\gamma,\alpha^2\beta\gamma^2,\alpha\beta^3\gamma,\alpha\beta^2\gamma^2,\alpha\beta\gamma^3$. In the second case, the pentagon is $\alpha\beta^2\gamma\delta,\alpha\beta\gamma^2\delta$. However, the angle sums of $\alpha\beta^2,\gamma^3$ and the angle sum for pentagon $\alpha\beta^2\gamma^2$ imply
\[
(3+\tfrac{4}{f})\pi
=\alpha+2\beta+2\gamma
=(\alpha+2\beta)+\tfrac{2}{3}\cdot 3\gamma
=(2+\tfrac{2}{3})\pi.
\]
Then we get $f=12$, and the case is dismissed. 
\end{proof}

\section{AVCs with Finitely Many $f$}
\label{routine1}

Starting from each combination in Proposition \ref{3complete}, we find the angle combinations at all the high degree vertices. We may also calculate the numbers of tiles and vertices.

Let us consider $\text{AVC}\supset\{\alpha^2\beta\gamma\delta\colon \alpha\beta\gamma,\alpha^3,\beta\delta^3\}$, which means $\alpha$, $\alpha$, $\beta$, $\gamma$, $\delta$ are the angles of the pentagon, $\alpha\beta\gamma,\alpha^3$ are all the degree $3$ vertices, and $\beta\delta^3$ is also a vertex.

The angle sums of $\alpha\beta\gamma,\alpha^3,\beta\delta^3$ and the angle sum for the pentagon $\alpha^2\beta\gamma\delta$ are
\[
\alpha+\beta+\gamma
=3\alpha
=\beta+3\delta
=2\pi,\quad
2\alpha+\beta+\gamma+\delta=(3+\tfrac{4}{f})\pi.
\]
Then we get
\[
\alpha=\tfrac{2}{3}\pi,\,
\beta=(1-\tfrac{12}{f})\pi,\,
\gamma=(\tfrac{1}{3}+\tfrac{12}{f})\pi,\,
\delta=(\tfrac{1}{3}+\tfrac{4}{f})\pi.
\]
For the angles to be positive (already implied by $f\ge 16$) and distinct, we require $f\ne 24,36$.

Besides the existing vertices $\alpha\beta\gamma,\alpha^3,\beta\delta^3$, any other vertex $\alpha^a\beta^b\gamma^c\delta^d$ is given by a quadruple $(a,b,c,d)$ of non-negative integers satisfying the angle sum equation 
\[
\tfrac{2}{3}a
+(1-\tfrac{12}{f})b
+(\tfrac{1}{3}+\tfrac{12}{f})c
+(\tfrac{1}{3}+\tfrac{4}{f})d=2,
\]
and the high degree condition
\[
a+b+c+d\ge 4.
\]
The angle sum equation can be rephrased as an expression for $f$
\[
f=\frac{12(-3b+3c+d)}{2a+3b+c+d-6}.
\]
By $f\ge 16$, the angle sum equation also implies
\[
2\ge \tfrac{2}{3}a
+\tfrac{1}{4}b
+\tfrac{1}{3}c
+\tfrac{1}{3}d.
\]
We substitute the finitely many quadruples of non-negative integers satisfying the inequality above and the high degree condition into the formula for $f$. We keep only those quadruples yielding even integers $f\ge16$ that are not $24$ and $36$. They are listed in Table \ref{2abcd_abc|3a|b3d_vertex}.

Note that we should also consider the possibility that the substitution yields $f=\frac{0}{0}$, which means $3b=3c+d$ and $2a+3b+c+d=6$. Since this contradicts the high degree condition, we do not need to consider the possibility.

\begin{table}[h]
\centering
\scalebox{1}{
\begin{tabular}{|c |c |c |c |c |c |c |}
\hline
\multirow{2}{*}{number} & 
\multicolumn{4}{|c|}{vertex $\alpha^a\beta^b\gamma^c\delta^d$} & 
\multirow{2}{*}{$f$} \\
\cline{2-5}
& $a$ & $b$ & $c$ & $d$ &   \\
\hline \hline
$x_1$ & 1 & 1 & 1 & 0 &  \\
\cline{1-5} 
$x_2$ & 3 & 0 & 0 & 0 &  \\
\cline{1-5} 
$x_3$ & 0 & 1 & 0 & 3 &  \\
\hline \hline
$y_1$ & 0 & 0 & 2 & 2 & 48 \\
\hline 
$y_1$ & 1 & 0 & 1 & 2 & \multirow{3}{*}{$60$}  \\
\cline{1-5} 
$y_2$ & 0 & 0 & 3 & 1 &   \\
\cline{1-5} 
$y_3$ & 0 & 0 & 0 & 5 &   \\
\hline
$y_1$ & 0 & 0 & 4 & 0 & 72 \\
\hline 
$y_1$ & 1 & 0 & 3 & 0 & \multirow{2}{*}{$84$}  \\
\cline{1-5} 
$y_2$ & 0 & 0 & 2 & 3 &   \\
\hline 
$y_1$ & 1 & 0 & 2 & 1 & \multirow{2}{*}{$108$}  \\
\cline{1-5} 
$y_2$ & 0 & 0 & 1 & 4 &   \\
\hline
$y_1$ & 0 & 0 & 3 & 2 & 132 \\
\hline
$y_1$ & 0 & 0 & 4 & 1 & 156 \\
\hline
$y_1$ & 0 & 0 & 0 & 5 & 180 \\
\hline 
\end{tabular}
}
\caption{Vertices for $\{\alpha^2\beta\gamma\delta\colon \alpha\beta\gamma,\alpha^3,\beta\delta^3\}$.}
\label{2abcd_abc|3a|b3d_vertex} 
\end{table}

Next we find the number of each vertex. In the table, $x_1,x_2,x_3$ are the numbers of existing vertices $\alpha\beta\gamma,\alpha^3,\beta\delta^3$. For $f=48$, $y_1$ is the number of $\gamma^2\delta^2$, and $\alpha\beta\gamma,\alpha^3,\beta\delta^3,\gamma^2\delta^2$ are all the vertices. By the angle combination $\alpha^2\beta\gamma\delta$ of the pentagon, the total numbers of $\alpha,\beta,\gamma,\delta$ in the tiling are respectively $2\times 48,48,48,48$. On the other hand, we may count the total numbers from the viewpoint of the vertices. This gives the {\em angle counting equations}
\begin{align*}
2\times 48 &=x_1+3x_2, \\
48 &=x_1+x_3, \\
48 &=x_1+2y_1, \\
48 &=3x_3+2y_1.
\end{align*}
The unique solution $x_1=36$, $x_2=20$, $x_3=12$, $y_1=6$ gives the full AVC. We also get the specific angle values by substituting $f=48$:
\begin{align*}
\{
48\alpha^2\beta\gamma\delta &\colon 
36\alpha\beta\gamma,\,
20\alpha^3,\,
12\beta\delta^3 \opt
6\gamma^2\delta^2
\}, \\
&
\alpha=\tfrac{2}{3}\pi,\,
\beta=\tfrac{3}{4}\pi,\,
\gamma=\tfrac{7}{12}\pi,\,
\delta=\tfrac{5}{12}\pi.
\end{align*}
For $f=60$, the angle counting equations are
\begin{align*}
2\times 60 &=x_1+3x_2+y_1, \\
60 &=x_1+x_3, \\
60 &=x_1+y_1+3y_2, \\
60 &=3x_3+2y_1+y_2+5y_3.
\end{align*}
The solution is
\[
x_1=60-y_1-3y_2,\,
x_2=20+y_2,\,
x_3=y_1+3y_2,\,
y_1+2y_2+y_3=12.
\]
The last equation is obtained by eliminating $x_1,x_2,x_3$ from the four equations, and is satisfied by finitely many (total number $=1+3+\cdots+13=49$) triples $(y_1,y_2,y_3)$ of non-negative integers. Among these triples, the only one not yielding all positive $x_1,x_2,x_3$ is $(y_1,y_2,y_3)=(0,0,12)$. With the implicit understanding on the 48 possible choices of $(y_1,y_2,y_3)$, we denote the full AVC as
\begin{align*}
\{
60\alpha^2\beta\gamma\delta &\colon 
(60-y_1-3y_2)\alpha\beta\gamma,\,
(20+y_2)\alpha^3,\,
(y_1+3y_2)\beta\delta^3 \\
\opt &
y_1\alpha\gamma\delta^2,\,
y_2\gamma^3\delta,\,
(12-y_1-2y_2)\delta^5
\}, \\
&
\alpha=\tfrac{2}{3}\pi,\,
\beta=\tfrac{4}{5}\pi,\,
\gamma=\tfrac{18}{15}\pi,\,
\delta=\tfrac{2}{5}\pi.
\end{align*}
The full AVCs for all the $f$ in Table \ref{2abcd_abc|3a|b3d_vertex} are given in Section \ref{classify3}.

Strictly speaking, we still need to consider the possibility that, for $f$ not listed in the table, the existing vertices may already form a viable full AVC. However, applying Lemma \ref{counting} to $\beta,\gamma$ in $\{\alpha^2\beta\gamma\delta\colon \alpha\beta\gamma,\alpha^3,\beta\delta^3\}$, we get a contradiction.

Now we describe the routine process in general. For an angle combination $\alpha^a\beta^b\gamma^c\cdots$, we call the collection of orders $(a,b,c,\dots)^T$ its {\em order vector} (always written vertically). In the example above, the pentagon $\alpha^2\beta\gamma\delta$ has the order vector $P=(2,1,1,1)^T$, and the existing vertices $\alpha\beta\gamma,\alpha^3,\beta\delta^3$ have the order vectors $X_1=(1,1,1,0)^T$, $X_2=(3,0,0,0)^T$, $X_3=(0,1,0,3)^T$. The given data form the {\em order matrix}
\[
(P\, X)=\begin{pmatrix}
2 & 1 & 3 & 0 \\
1 & 1 & 0 & 1 \\
1 & 1 & 0 & 0 \\
1 & 0 & 0 & 3 
\end{pmatrix}.
\]

Let (the horizontal vector) $\vec{\alpha}=(\alpha,\beta,\gamma,\dots)$ be the distinct angles in the pentagon. The angle sum for pentagon and the angle sum of vertices are
\[
\vec{\alpha}P=(3+\tfrac{4}{f})\pi,\quad
\vec{\alpha}X_i=2\pi.
\] 
This is the same as
\[
\vec{\alpha}(P\, X)
=\pi(3,2,2,2,\dots)
+\tfrac{1}{f}\pi(4,0,0,0,\dots).
\]

Suppose the order matrix $(P\, X)$ is invertible. Then we get the angle values
\[
\vec{\alpha}=\vec{\alpha}_0+\tfrac{1}{f}\vec{\alpha}_1,
\]
where
\[
\vec{\alpha}_0=\pi(3,2,2,2,\dots)(P\,X)^{-1},\quad
\vec{\alpha}_1=\pi(4,0,0,0,\dots)(P\,X)^{-1},
\]
are the constant part and $\frac{1}{f}$ part of the angle values. The second equality means
\begin{equation}\label{eq1}
\vec{\alpha}_1P=4\pi,\quad
\vec{\alpha}_1X=(0,0,0,\dots).
\end{equation}

Next we verify the requirement that $\alpha,\beta,\gamma,\dots$ have positive and distinct values, which often means that $f$ cannot take certain finitely many values. Moreover, we may estimate the first angle $\alpha=\alpha_0+\frac{1}{f}\alpha_1$ as follows:
\begin{enumerate}
\item If $\alpha_0>0$, then $\alpha>0$ for sufficiently large $f$. 
\item If $\alpha_0<0$, then $\alpha>0$ for only finitely many $f$. 
\item If $\alpha_0=0$, then $\alpha=\frac{1}{f}\alpha_1$ can become arbitrarily small. 
\end{enumerate}
In the first and second cases, we get a positive lower bound for $\alpha$. The third is the only case $\alpha$ has no positive lower bound. The same estimation can be carried out for the other angles. 

Next we try to find all the order vectors $\vec{v}=(a,b,c,\dots)^T$ other than $X_i$. The vector satisfies the angle sum equation
\[
2\pi=a\alpha+b\beta+c\gamma+\dots
=\vec{\alpha}\vec{v}
=\vec{\alpha}_0\vec{v}+\tfrac{1}{f}\vec{\alpha}_1\vec{v},
\]
and the high degree condition
\[
\deg\vec{v}=a+b+c+\dots\ge 4.
\]

Suppose all coordinates of $\vec{\alpha}_0$ are nonzero. Then we have positive lower bounds for all the angles. By the angle sum equation, these lower bounds translate into upper bounds for the coordinates of $\vec{v}$, which means finitely many possible choices for $\vec{v}$. We substitute those choices satisfying the high degree condition $\deg\vec{v}\ge 4$ into 
\[
f=\frac{\vec{\alpha}_1\vec{v}}{2\pi-\vec{\alpha}_0\vec{v}},
\]
and keep only those yielding even integers $f\ge 16$, and such that all angles are positive and distinct. Then we get finitely many possible $f$ and for each $f$, finitely many optional vertices.

We need to consider the possibility that $\vec{\alpha}_0\vec{v}=2\pi$ and $\vec{\alpha}_1\vec{v}=0$. Since the order matrix is invertible, we may write $\vec{v}=(P\,X)\vec{u}$. Then we get
\begin{align*}
2\pi
&=\vec{\alpha}_0\vec{v}
=\vec{\alpha}_0(P\,X)\vec{u}
=\pi(3,2,2,2,\dots)\vec{u}, \\
0
&=\vec{\alpha}_1\vec{v}
=\vec{\alpha}_1(P\,X)\vec{u}
=\pi(4,0,0,0,\dots)\vec{u}.
\end{align*}
Therefore $\vec{u}=(0,\lambda_1,\lambda_2,\dots)$, with $\lambda_1+\lambda_2+\cdots=1$. If all the existing vertices $X_i$ have degree $3$, then $\vec{v}=(P\,X)\vec{u}=\lambda_1 X_1+\lambda_2 X_2+\cdots$ also has degree $3$, and fails the high degree condition. This means we may dismiss the possibility that $\vec{\alpha}_0\vec{v}=2\pi$ and $\vec{\alpha}_1\vec{v}=0$.

In the earlier example that started with $\{\alpha^2\beta\gamma\delta\colon \alpha\beta\gamma,\alpha^3,\beta\delta^3\}$, we know $X_1,X_2$ have degree $3$, and $X_3$ has degree $4$. Then the argument above is not valid. Still, for all the cases we calculate, where one degree $4$ or $5$ vertex is added via Lemma \ref{hdeg}, the possibility that $\vec{\alpha}_0\vec{v}=2\pi$ and $\vec{\alpha}_1\vec{v}=0$ does not happen.

For each possible $f$, the order vectors of the corresponding optional vertices form a matrix $Y$. The numbers $x_i$ and $y_j$ of all vertices satisfy the angle counting equation
\[
fP=X\vec{x}+Y\vec{y},\quad
\vec{x}=(x_1,x_2,\dots)^T,\quad
\vec{y}=(y_1,y_2,\dots)^T.
\]
The equation is the same as 
\[
(P\, X)\begin{pmatrix}
f \\ -\vec{x}
\end{pmatrix}
=fP-X\vec{x}
=Y\vec{y}.
\]
Then we get
\[
\begin{pmatrix}
f \\ -\vec{x}
\end{pmatrix}
=(P\, X)^{-1}Y\vec{y}.
\]
Therefore $\vec{y}$ satisfies one equation (recall $f$ has specific value), and $\vec{x}$ can be expressed in terms of $\vec{y}$. 

Finally, if all the existing vertices have degree $3$, then we claim that the equation for $\vec{y}$ is exactly the vertex counting equation. Indeed, multiplying $\vec{\alpha}$ and $(1,1,\dots)$ respectively to the angle counting equation, we get
\begin{align*}
(3+\tfrac{4}{f})f\pi
&=f\vec{\alpha}P
=\vec{\alpha}X\vec{x}+\vec{\alpha}Y\vec{y} \\
&=2\pi(1,1,\dots)\vec{x}+2\pi(1,1,\dots)\vec{y}, \\
5f
&=f(1,1,\dots)P
=(1,1,\dots)X\vec{x}+(1,1,\dots)Y\vec{y} \\
&=3(1,1,\dots)\vec{x}+(1,1,\dots)Y\vec{y}.
\end{align*}
Eliminating $\vec{x}$ from the two equalities, we get exactly \eqref{vcountf}
\[
\tfrac{f}{2}-6=[(1,1,\dots)Y-3(1,1,\dots)]\vec{y}.
\]
Therefore the vertex counting equation can be derived from the angle counting equations.

If there is an angle not appearing at degree $3$ vertices, then the last column of $X$ is given by the third part of Lemma \ref{hdeg}, and the vertex counting equation also includes the number of this vertex. For example, in the earlier example, for $f=60$, the vertex counting equation is 
\[
x_3+y_1+y_2+3y_3=\tfrac{60}{2}-6=24. 
\]
On the other hand, the last angle counting equation gives us 
\[
x_3=20-\tfrac{1}{3}(2y_1+y_2+5y_3).
\]
Substituting this into the vertex counting equation above, we get
\[
y_1+2y_2+y_3=12.
\]
This is a general way of getting the relation among $y_i$.

\section{AVCs with Variable $f$}
\label{routine2}

The routine in Section \ref{routine1} assumes that the order matrix $(P\,X)$ is invertible, and the constant part $\vec{\alpha}_0$ of every angle is nonzero. In this section, we still assume the ivertibility of the order matrix, but allow {\em one} angle to have zero constant part. 

Let us consider the $\text{AVC}\supset\{\alpha\beta\gamma^2\delta\colon \alpha\beta^2,\gamma^3,\alpha\delta^3\}$, which means $\alpha$, $\beta$, $\gamma$, $\gamma$, $\delta$ are the angles of the pentagon, $\alpha\beta^2,\gamma^3$ are all the degree $3$ vertices, and $\alpha\delta^3$ is also a vertex.

The order matrix 
\[
(P\, X)=\begin{pmatrix}
1 & 1 & 0 & 1 \\
2 & 2 & 0 & 0 \\
1 & 0 & 3 & 0 \\
1 & 0 & 0 & 3
\end{pmatrix}
\]
is invertible. The angle sums of $\alpha\beta^2,\gamma^3,\alpha\delta^3$ and the  angle sum for the pentagon $\alpha\beta\gamma^2$ imply
\[
\alpha=\tfrac{24}{f}\pi,\,
\beta=(1-\tfrac{12}{f})\pi,\,
\gamma=\tfrac{2}{3}\pi,\,
\delta=(\tfrac{2}{3}-\tfrac{8}{f})\pi.
\]
We note that $\vec{\alpha}_0=(0,1,\frac{2}{3},\frac{2}{3})$, and $\alpha$ has zero constant part. For the angles to be positive (implied by $f\ge 16$) and distinct, we require $f\ne 36,48$.

The angle sum of a vertex $\alpha^a\beta^b\gamma^c\delta^d$ implies
\[
a=\tfrac{f}{72}(6-3b-2c-2d)+\tfrac{1}{6}(3b+2d),\quad
2\ge \tfrac{1}{4}b+\tfrac{2}{3}c+\tfrac{1}{6}d.
\]
Substituting the finitely many triples $(b,c,d)$ of non-negative integers satisfying the inequality into the expression for $a$, we get all the possible optional vertices in Tables \ref{ab2cd||1a2b|3c|1a3d_vertex2} and \ref{ab2cd||1a2b|3c|1a3d_vertex}. 

We note that, if $6-3b-2c-2d=0$, then we need to consider whether $a=\frac{1}{6}(3b+2d)$ is a non-negative integer. This would give vertices that may appear for all $f$. It turns out that all we get are the existing vertices $\alpha\beta^2,\gamma^3,\alpha\delta^3$. 

Table \ref{ab2cd||1a2b|3c|1a3d_vertex2} consists of those $(b,c,d,f)$ satisfying $6-3b-2c-2d<0$, $f$ is a fixed even integer satisfying $f\ge 16$ and $f\ne 36,48$, $a$ is a non-negative integer, and the degree of the vertex is $\ge 4$. It turns out we get $a=0$ in all cases.

\begin{table}[htp]
\centering
\scalebox{1}{
\begin{tabular}{|c |c |c |c |c |c |}
\hline
number & $a$ & $b$ & $c$ & $d$ & $f$  \\
\hline \hline 
$y_1$ & 0 & 2 & 2 & 1 & \multirow{10}{*}{$f=16$} \\
\cline{1-5}
$y_2$ & 0 & 0 & 2 & 4 & \\
\cline{1-5}
$y_3$ & 0 & 4 & 1 & 2 & \\
\cline{1-5}
$y_4$ & 0 & 8 & 0 & 0 & \\
\cline{1-5}
$y_5$ & 0 & 2 & 1 & 5 & \\
\cline{1-5}
$y_6$ & 0 & 6 & 0 & 3 & \\
\cline{1-5}
$y_7$ & 0 & 0 & 1 & 8 & \\
\cline{1-5}
$y_8$ & 0 & 4 & 0 & 6 & \\
\cline{1-5}
$y_9$ & 0 & 2 & 0 & 9 & \\
\cline{1-5}
$y_{10}$ & 0 & 0 & 0 & 12 & \\
\cline{1-6} 
$y_1$ & 0 & 2 & 2 & 0 & \multirow{9}{*}{$f=18$}  \\
\cline{1-5}
$y_2$ & 0 & 4 & 1 & 0 & \\
\cline{1-5}
$y_3$ & 0 & 0 & 2 & 3 & \\
\cline{1-5}
$y_4$ & 0 & 6 & 0 & 0 & \\
\cline{1-5}
$y_5$ & 0 & 2 & 1 & 3 & \\
\cline{1-5}
$y_6$ & 0 & 4 & 0 & 3 & \\
\cline{1-5}
$y_7$ & 0 & 0 & 1 & 6 & \\
\cline{1-5}
$y_8$ & 0 & 2 & 0 & 6 & \\
\cline{1-5}
$y_9$ & 0 & 0 & 0 & 9 & \\
\hline
\end{tabular}
\quad
\begin{tabular}{|c |c |c |c |c |c |}
\hline
number & $a$ & $b$ & $c$ & $d$ & $f$  \\
\hline \hline 
$y_1$ & 0 & 1 & 2 & 1 & \multirow{6}{*}{$f=20$}  \\
\cline{1-5}
$y_2$ & 0 & 5 & 0 & 0 & \\
\cline{1-5}
$y_3$ & 0 & 2 & 1 & 2 & \\
\cline{1-5}
$y_4$ & 0 & 3 & 0 & 3 & \\
\cline{1-5}
$y_5$ & 0 & 0 & 1 & 5 & \\
\cline{1-5}
$y_6$ & 0 & 1 & 0 & 6 & \\
\cline{1-6} 
$y_1$ & 0 & 4 & 0 & 0 & \multirow{6}{*}{$f=24$}  \\
\cline{1-5}
$y_2$ & 0 & 2 & 1 & 1 & \\
\cline{1-5}
$y_3$ & 0 & 0 & 2 & 2 & \\
\cline{1-5}
$y_4$ & 0 & 2 & 0 & 3 & \\
\cline{1-5}
$y_5$ & 0 & 0 & 1 & 4 & \\
\cline{1-5}
$y_6$ & 0 & 0 & 0 & 6 & \\
\cline{1-6} 
$y_1$ & 0 & 1 & 1 & 2 & $f=28$  \\
\cline{1-6} 
$y_1$ & 0 & 2 & 0 & 2 & \multirow{2}{*}{$f=30$}   \\
\cline{1-5}
$y_2$ & 0 & 0 & 0 & 5 &   \\
\hline
\end{tabular}
}
\caption{Vertices with fixed $f$ for $\{\alpha\beta\gamma^2\delta\colon \alpha\beta^2,\gamma^3,\alpha\delta^3\}$.}
\label{ab2cd||1a2b|3c|1a3d_vertex2}
\end{table}

Table \ref{ab2cd||1a2b|3c|1a3d_vertex} consists of vertices that may appear for multiple $f$, i.e., non-fixed $f$. The list includes the three existing vertices $\alpha\beta^2,\gamma^3,\alpha\delta^3$ and those $(b,c,d)$ satisfying $6-3b-2c-2d>0$. Such vertices allow $f$ to be arbitrarily large, but some modulus condition must be satisfied in order for $a$ to be a non-negative integer. Moreover, the high degree condition becomes lower bounds for $f$.

\begin{table}[htp]
\centering
\scalebox{1}{
\begin{tabular}{|c |c |c |c |c |c |c |}
\hline
\multirow{2}{*}{number} & 
\multicolumn{4}{|c|}{vertex $\alpha^a\beta^b\gamma^c\delta^d$} & 
\multirow{2}{*}{$f$} & 
\multirow{2}{*}{$f$ mod 72} \\
\cline{2-5}
 & $a$ & $b$ & $c$ & $d$ & & \\
\hline \hline
$x_1$ & 1 & 2 & 0 & 0 & & \\
\cline{1-5} 
$x_2$ & 0 & 0 & 3 & 0 & & \\
\cline{1-5}
$x_3$ & 1 & 0 & 0 & 3 & & \\
\hline  \hline 
$z_1$ & $\frac{f+60}{72}$ & 1 & 0 & 1 & 
$f=12(72),\, f\ge 84$ & 
$12$ \\
\hline 
$z_2$ & $\frac{f+36}{72}$ & 1 & 1 & 0 & 
$f=36(72),\, f\ge 108$ & 
$36$ \\
\hline 
$z_3$ & $\frac{f}{36}$ & 0 & 2 & 0 & 
$f=0(36),\, f\ge 72$ & 
$0,36$ \\
\hline 
$z_4$ & $\frac{f+24}{36}$ & 0 & 0 & 2 & 
$f=12(36),\, f\ge 84$ & 
$12,48$ \\
\hline 
$z_5$ & $\frac{f+12}{36}$ & 0 & 1 & 1 & 
$f=24(36),\, f\ge 60$ & 
$24,60$ \\
\hline 
$z_6$ & $\frac{f+12}{24}$ & 1 & 0 & 0 & 
$f=12(24),\, f\ge 60$ & 
$12,36,60$ \\
\hline 
$z_7$ & $\frac{f}{18}$ & 0 & 1 & 0 & 
$f=0(18),\, f\ge 54$ & 
$0,18,36,54$ \\
\hline 
$z_8$ & $\frac{f+6}{18}$ & 0 & 0 & 1 & 
$f=12(18),\, f\ge 66$ & 
$12,30,48,66$ \\
\hline 
$z_9$ & $\frac{f}{12}$ & 0 & 0 & 0 & 
$f=0(12),\, f\ge 60$ & 
$0,12,24,36,48,60$ \\
\hline
\end{tabular}
}
\caption{Vertices with variable $f$ for $\{\alpha\beta\gamma^2\delta\colon \alpha\beta^2,\gamma^3,\alpha\delta^3\}$.}
\label{ab2cd||1a2b|3c|1a3d_vertex}
\end{table}

There is no overlapping between the two tables. For example, although $f=18$ appears in Table \ref{ab2cd||1a2b|3c|1a3d_vertex2}, we require $f\ge 54$ for $f=0(18)$ in Table \ref{ab2cd||1a2b|3c|1a3d_vertex}. 

Next, we calculate the numbers of tiles and vertices by solving the angle counting equation. For all $f$ in Table \ref{ab2cd||1a2b|3c|1a3d_vertex2}, it turns out that we always get contradictions. Therefore only the cases of variable $f$ remain.

For all the vertices in Table \ref{ab2cd||1a2b|3c|1a3d_vertex}, the angle counting equations are
\begin{align*}
f
&=x_1+x_3
+\tfrac{f+60}{72}z_1
+\tfrac{f+36}{72}z_2
+\tfrac{f}{36}z_3
+\tfrac{f+24}{36}z_4 \\
&\quad
+\tfrac{f+12}{36}z_5
+\tfrac{f+12}{24}z_6
+\tfrac{f}{18}z_7
+\tfrac{f+6}{18}z_8
+\tfrac{f}{12}z_9, \\
f
&=2x_1+z_1+z_2+z_6, \\
2f
&=3x_2+z_2+2z_3+z_5+z_7, \\
f
&=3x_3+z_1+2z_4+z_5+z_8.
\end{align*}
The solution is expressions of $x_i$ in terms of $f$ and $z_j$:
\begin{align*}
x_1
&=\tfrac{1}{2}f-\tfrac{1}{2}z_1-\tfrac{1}{2}z_2-\tfrac{1}{2}z_6, \\
x_2
&=\tfrac{1}{3}f-\tfrac{1}{3}z_2-\tfrac{2}{3}z_3-\tfrac{1}{3}z_5-\tfrac{1}{3}z_7, \\
x_3
&=\tfrac{1}{3}f-\tfrac{1}{3}z_1-\tfrac{2}{3}z_4-\tfrac{1}{3}z_5-\tfrac{1}{3}z_8,
\end{align*}
and an equality relating $f$ and $z_j$ (and not involving $x_i$), like what we get in Section \ref{routine1}. In fact, the angle counting equations imply the vertex counting equation \eqref{vcountf}
\begin{align*}
\tfrac{f}{2}-6
&=x_3+(\tfrac{f+60}{72}-1)z_1
+(\tfrac{f+36}{72}-1)z_2
+(\tfrac{f}{36}-1)z_3 \\
&\quad
+(\tfrac{f+24}{36}-1)z_4
+(\tfrac{f+12}{36}-1)z_5
+(\tfrac{f+12}{24}-2)z_6\\
&\quad
+(\tfrac{f}{18}-2)z_7
+(\tfrac{f+6}{18}-2)z_8
+(\tfrac{f}{12}-3)z_9.
\end{align*}
Substituting the formula for $x_3$, we get 
\begin{align*}
\tfrac{f}{2}-\tfrac{f}{3}-6
&=(\tfrac{f+60}{72}-1-\tfrac{1}{3})z_1
+(\tfrac{f+36}{72}-1)z_2
+(\tfrac{f}{36}-1)z_3 \\
&\quad
+(\tfrac{f+24}{36}-1-\tfrac{2}{3})z_4
+(\tfrac{f+12}{36}-1-\tfrac{1}{3})z_5
+(\tfrac{f+12}{24}-2)z_6\\
&\quad
+(\tfrac{f}{18}-2)z_7
+(\tfrac{f+6}{18}-2-\tfrac{1}{3})z_8
+(\tfrac{f}{12}-3)z_9.
\end{align*}
It turns out that $f-36$ is a common factor on both sides. By $f\ne 36$, we may divide the factor and get the relation for $z_j$
\[
z_1+z_2+2z_3+2z_4+2z_5+3z_6+4z_7+4z_8+6z_9=12.
\]
The relation does not involve $f$! We will explain that this is a general phenomenon at the end of the section.

We summarize the full AVC as (see page \pageref{eg12})
\begin{align*}
\{
f\alpha\beta\gamma^2\delta &\colon 
x_1\alpha\beta^2, \,
x_2\gamma^3, \,
x_3\alpha\delta^3 \\
\opt &
z_1\alpha^{\frac{f+60}{72}}\beta\delta,\,
z_2\alpha^{\frac{f+36}{72}}\beta\gamma,\,
z_3\alpha^{\frac{f}{36}}\gamma^2,\,
z_4\alpha^{\frac{f+24}{36}}\delta^2,\, 
\\
&
z_5\alpha^{\frac{f+12}{36}}\gamma\delta,\,
z_6\alpha^{\frac{f+12}{24}}\beta,\,
z_7\alpha^{\frac{f}{18}}\gamma,\,
z_8\alpha^{\frac{f+6}{18}}\delta,\,
z_9\alpha^{\frac{f}{12}}
\}, \\
& x_1 =\tfrac{1}{2}f-\tfrac{1}{2}z_1-\tfrac{1}{2}z_2-\tfrac{1}{2}z_6, \\
& x_2 =\tfrac{1}{3}f-\tfrac{1}{3}z_2-\tfrac{2}{3}z_3-\tfrac{1}{3}z_5-\tfrac{1}{3}z_7, \\
& x_3 =\tfrac{1}{3}f-\tfrac{1}{3}z_1-\tfrac{2}{3}z_4-\tfrac{1}{3}z_5-\tfrac{1}{3}z_8, \\
& z_1+z_2+2z_3+2z_4+2z_5+3z_6+4z_7+4z_8+6z_9=12, \\
&
\alpha=\tfrac{24}{f}\pi,\,
\beta=(1-\tfrac{12}{f})\pi,\,
\gamma=\tfrac{2}{3}\pi,\,
\delta=(\tfrac{2}{3}-\tfrac{8}{f})\pi.
\end{align*}
Since the relation for $z_j$ implies some $z_j\ne 0$, the existing vertices $\alpha\beta^2,\gamma^3,\alpha\delta^3$ cannot be all the vertices.

The modulus condition means some $z_j=0$, i.e., the corresponding vertex does not appear. For example, if $f=0$ mod $72$, then only $z_3,z_7,z_9$ can be non-trivial. Denote $f=72k$. Then the AVC becomes 
\begin{align*}
\{
72k\alpha\beta\gamma^2\delta &\colon 
x_1\alpha\beta^2, \,
x_2\gamma^3, \,
x_3\alpha\delta^3 
\opt 
z_3\alpha^{2k}\gamma^2,\,
z_7\alpha^{4k}\gamma,\,
z_9\alpha^{6k}
\}, \\
& x_1 =36k, \\
& x_2 =24k-\tfrac{2}{3}z_3-\tfrac{1}{3}z_7, \\
& x_3 =24k, \\
& 2z_3+4z_7+6z_9=12, \\
&
\alpha=\tfrac{1}{3k}\pi,\,
\beta=(1-\tfrac{1}{6k})\pi,\,
\gamma=\tfrac{2}{3}\pi,\,
\delta=(\tfrac{2}{3}-\tfrac{1}{9k})\pi.
\end{align*}
For the other modulus classes of $f$, we list the  non-trivial $z_j$ in Table \ref{ab2cd||1a2b|3c|1a3d_mod}.

\begin{table}[htp]
\centering
\scalebox{1}{
\begin{tabular}{|c |c |}
\hline
$f$ mod $72$ &
non-trivial $z_j$ \\
\hline \hline
0 &
$z_3,z_7,z_9$ \\
\hline
12 &
$z_1,z_4,z_6,z_8,z_9$ \\
\hline
18, 54 &
$z_7$ \\
\hline
24 &
$z_5,z_9$ \\
\hline
30, 66 &
$z_8$ \\
\hline
36 &
$z_2,z_3,z_6,z_7,z_9$ \\
\hline
48 &
$z_4,z_8,z_9$ \\
\hline
60 &
$z_5,z_6,z_9$ \\
\hline
\end{tabular}
}
\caption{Non-trivial $z_j$ for various modulus classes of $f$.}
\label{ab2cd||1a2b|3c|1a3d_mod}
\end{table}

For each modulus class of $f$, there are finitely many combinations of the corresponding non-trivial $z_j$ satisfying the $z_j$ only relation. Among these, we choose only those $z_j$ such that $x_1,x_2,x_3$ are positive integers. Moreover, all vertices with nonzero $z_j$ should have degree $\ge 4$.

Now we describe the routine process in general. Similar to in Section \ref{routine1}, we assume the order matrix $(P\, X)$ is invertible. Moreover, we assume the constant part of one angle is zero, and the constant parts of all other angles are nonzero.

Let $\alpha$ be the special angle with zero constant part. Then $\alpha=\tfrac{1}{f}\alpha_1$ and
\[
\vec{\alpha}_0=(0,\beta_0,\gamma_0,\dots)=(0,\vec{\beta}_0),\quad
\vec{\alpha}_1=(\alpha_1,\beta_1,\gamma_1,\dots)=(\alpha_1,\vec{\beta}_1).
\]
The order vector $\vec{v}=(a,b,c,\dots)^T=(a,\vec{w})^T$ of a vertex $\alpha^a\beta^b\gamma^c\cdots$ satisfies the angle sum equation
\[
2\pi
=\vec{\alpha}_0\vec{v}+\tfrac{1}{f}\vec{\alpha}_1\vec{v}
=\vec{\beta}_0\vec{w}+\tfrac{1}{f}\alpha_1a+\tfrac{1}{f}\vec{\beta}_1\vec{w}.
\]
We solve the equation for $a$
\[
a=\tfrac{f}{\alpha_1}(2\pi-\vec{\beta}_0\vec{w})-\tfrac{1}{\alpha_1}\vec{\beta}_1\vec{w}.
\]
Since the constant parts of $\beta,\gamma,\dots$ are nonzero, we get lower bounds for these angles like in Section \ref{routine1}. These lower bounds imply finitely many possible choices for $\vec{w}$. Substituting these choices into the formula for $a$, we get three possibilities:
\begin{enumerate}
\item If $2\pi-\vec{\beta}_0\vec{w}>0$, then we get non-negative integers $a$ for sufficiently large $f$ satisfying a modulus condition. 
\item If $2\pi-\vec{\beta}_0\vec{w}<0$, then we get non-negative integers $a$ for only finitely many $f$. 
\item If $2\pi-\vec{\beta}_0\vec{w}=0$, then we get a vertex appearing for all $f$, whenever $a=-\frac{1}{\alpha_1}\vec{\beta}_1\vec{w}$ is a non-negative integer. 
\end{enumerate}

The case $2\pi-\vec{\beta}_0\vec{w}=0$ is exactly the case $\vec{\alpha}_0\vec{v}=2\pi$ and $\vec{\alpha}_1\vec{v}=0$ in Section \ref{routine1}. If all the existing vertices have degree $3$, then the case does not yield any new vertex. Moreover, if the existing vertices include one degree $4$ or $5$ vertex via Lemma \ref{hdeg}, then our experience also shows the case $2\pi-\vec{\beta}_0\vec{w}=0$ does not yield new vertices.

From the case $2\pi-\vec{\beta}_0\vec{w}<0$, for each of the finitely many possible $f$, we get a finite collection of all optional vertices like Table \ref{ab2cd||1a2b|3c|1a3d_vertex2}. Then our experience shows that the corresponding angle counting equations always lead to contradiction.

Therefore only the case $2\pi-\vec{\beta}_0\vec{w}>0$ leads to full AVCs. The order vectors of the optional vertices form the columns of a matrix  
\[
Z=Z_0+fZ_1,
\]
where $Z_0$ (consisting of $(-\tfrac{1}{\alpha_1}\vec{\beta}_1\vec{w},\vec{w})^T$) and $Z_1$ (consisting of $(\tfrac{1}{\alpha_1}(2\pi-\vec{\beta}_0\vec{w}),\vec{0})^T$) are constant matrices. The comparable matrix $Y$ in Section \ref{routine1} is a constant matrix without the $f$ part. The angle sum equation for $Z$ is
\[
2\pi(1,1,\dots)
=\vec{\alpha}Z
=\vec{\alpha}_0Z_0+\vec{\alpha}_1Z_1+f\vec{\alpha}_0Z_1+\tfrac{1}{f}\vec{\alpha}_1Z_0.
\]
Since this is satisfied by infinitely many $f$, we get
\begin{equation}\label{eq2}
\vec{\alpha}_0Z_0+\vec{\alpha}_1Z_1=2\pi(1,1,\dots),\quad
\vec{\alpha}_0Z_1=\vec{\alpha}_1Z_0=(0,0,\dots).
\end{equation}
Like Section \ref{routine1}, the solution of the angle counting equation
\[
fP=X\vec{x}+Z\vec{z}
\]
is
\[
\begin{pmatrix}
f \\ -\vec{x}
\end{pmatrix}
=(P\, X)^{-1}Z\vec{z}.
\]
Again $\vec{z}$ satisfies one equation (without $\vec{x}$), and $\vec{x}$ can be expressed in terms of $\vec{z}$. 

Our example suggests that $f$ can be canceled in the equation satisfied by $\vec{z}$. This is no accident. Multiplying $\vec{\alpha}_1$ to the angle counting equation and using \eqref{eq1}, \eqref{eq2}, we get
\[
4\pi f
=\vec{\alpha}_1fP
=\vec{\alpha}_1X\vec{x}+\vec{\alpha}_1Z\vec{z}
=(\vec{\alpha}_1Z_0+\vec{\alpha}_1Z_1f)\vec{z}
=\vec{\alpha}_1Z_1\vec{z}f.
\]
Therefore $f$ can be cancelled, and $\vec{z}$ satisfies an equation $4\pi=\vec{\alpha}_1Z_1\vec{z}$ with constant coefficients.

\section{Some Complicated Examples}
\label{routine3}

The routine processes in Sections \ref{routine1} and \ref{routine2} cover all except two AVCs in Proposition \ref{3complete}. In this section, we first compute these two AVCs. Then we compute two more examples not covered by Proposition \ref{3complete}. 

\medskip

\noindent{\bf Example 1}. 
$\{\alpha\beta^2\gamma\delta\colon \alpha\beta^2,\gamma^3\}$, no other degree $3$ vertices.

\medskip

By Lemma \ref{hdeg}, one of $\alpha\delta^3,\beta\delta^3,\gamma\delta^3,\delta^4,\delta^5$ is a vertex. The case $\text{AVC}\supset\{\alpha\beta^2\gamma\delta\colon \alpha\beta^2,\gamma^3,\alpha\delta^3\}$ can be handled by Section \ref{routine1}, and the result is given on page \pageref{eg12}. The case $\text{AVC}\supset\{\alpha\beta^2\gamma\delta\colon \alpha\beta^2,\gamma^3,\beta\delta^3\}$ can be handled by Section \ref{routine2}, and the result is given on page \pageref{eg26}. It remains to consider $\{\alpha\beta^2\gamma\delta\colon \alpha\beta^2,\gamma^3,\gamma\delta^3\}$, $\{\alpha\beta^2\gamma\delta\colon \alpha\beta^2,\gamma^3,\delta^4\}$, $\{\alpha\beta^2\gamma\delta\colon \alpha\beta^2,\gamma^3,\delta^5\}$. What is special about the three cases is that the order matrices are singular.

We deal with the last one $\{\alpha\beta^2\gamma\delta\colon \alpha\beta^2,\gamma^3,\delta^5\}$ first. The angle sums of $\alpha\beta^2,\gamma^3,\delta^5$ and the angle sum for pentagon $\alpha\beta^2\gamma\delta$ imply
\[
\alpha+2\beta=2\pi,\,
\gamma=\tfrac{2}{3}\pi,\,
\delta=\tfrac{2}{5}\pi,\,
f=60.
\]

Next, we apply the idea of Lemma \ref{counting}. Given the pentagon $\alpha\beta^2\gamma\delta$, the total number of $\alpha$ in the tiling is $f$, and the total number of $\beta$ is $2f$, twice of the number of $\alpha$. The equality $2a=b$ holds for all the existing vertices $\alpha\beta^2,\gamma^3,\delta^5$. Therefore to maintain the global doubling relation between the total numbers of $\alpha$ and $\beta$, we either have $2a=b$ for all the vertices, or have vertices with $2a>b$ as well as vertices with $2a<b$.

If $\alpha<\beta$, then $\alpha<\frac{2}{3}\pi<\beta$, and the angle sum of $\alpha^a\beta^b\gamma^c\delta^d$ implies
\[
2>\tfrac{2}{3}b+\tfrac{2}{3}c+\tfrac{2}{5}d.
\]
If $2a<b$, then $b\ge 1$. The following are all the triples $(b,c,d)$ satisfying $b\ge 1$ and the inequality above
\[
(2,0,1), (2,0,0), (1,1,1), (1,1,0), (1,0,3), (1,0,2), (1,0,1), (1,0,0). 
\]
Then the only ways we can add $a$ to satisfy $2a<b$ and $a+b+c+d\ge 4$ are $(a,b,c,d)=(1,2,0,1),(0,1,0,3)$. By $\alpha+2\beta=2\pi$, we cannot have $(1,2,0,1)$. On the other hand, if $(0,1,0,3)$ is a vertex, then the angle sums of $\alpha\beta^2,\delta^5,\beta\delta^3$ imply $\beta=2\delta$ and $\alpha=\delta$, contradicting the distinct angle assumption. We conclude the existing vertices $\alpha\beta^2,\gamma^3,\delta^5$ are all the vertices.

If $\alpha>\beta$, then $\alpha>\frac{2}{3}\pi>\beta$, and the angle sum of $\alpha^a\beta^b\gamma^c\delta^d$ implies
\[
2>\tfrac{2}{3}a+\tfrac{2}{3}c+\tfrac{2}{5}d.
\]
Similar to the case $\alpha<\beta$, we look for vertices of degree $\ge 4$, such that $2a>b$ and the inequality above is satisfied. The only possible quadruples $(a,b,c,d)$ are 
\[
(2,3,0,1), (2,2,0,1), (2,1,0,1), (2,3,0,0), (2,2,0,0), 
\]
\[
(1,1,1,1), (1,1,0,3), (1,1,0,2), (1,0,0,3). 
\]
By $2\alpha+\beta>\alpha+2\beta=2\pi$, we know all $(2,*,*,*)$ cannot be vertices. By $2\delta=\frac{4}{5}\pi>\frac{2}{3}\pi>\beta$, and $\gamma>\delta$, and $\alpha+2\beta=2\pi$, we know $(1,1,1,1),(1,1,0,3),(1,1,0,2)$ are not vertices. Therefore $(1,0,0,3)$ is the only remaining possibility. If $(1,0,0,3)$ is indeed a vertex, then 
\[
\text{AVC}
\supset\{\alpha\beta^2\gamma\delta\colon \alpha\beta^2,\gamma^3,\alpha\delta^3,\delta^5\}
\supset\{\alpha\beta^2\gamma\delta\colon \alpha\beta^2,\gamma^3,\alpha\delta^3\}.
\]
The AVC on the right has been handled in Section \ref{routine1}. The full AVC is given on page \pageref{eg12}, and the current case is exactly the case $f=60$ in that full AVC. As viewed from $\{\alpha\beta^2\gamma\delta\colon \alpha\beta^2,\gamma^3,\delta^5\}$, the result  is also included in the AVC on page \pageref{eg34}. 

If $(1,0,0,3)$ is not a vertex, then we have $2a=b$ in all the optional vertices. Then by $\alpha+2\beta=2\pi$, a vertex is $\alpha\beta^2,\gamma^c\delta^d$. Then by $\gamma=\tfrac{2}{3}\pi,\delta=\tfrac{2}{5}\pi$, we get $\gamma^c\delta^d=\gamma^3,\delta^5$. We conclude $\alpha\beta^2,\gamma^3,\delta^5$ are all the vertices. Furthermore, we solve the angle counting equations and get the full AVC
\[
\{60\alpha\beta^2\gamma\delta\colon 60\alpha\beta^2,20\gamma^3,12\delta^5\}.
\]
This is included in the AVC on page \pageref{eg34}.

Next we turn to $\{\alpha\beta^2\gamma\delta\colon \alpha\beta^2,\gamma^3,\gamma\delta^3\}$. The angle sums of $\alpha\beta^2,\gamma^3,\gamma\delta^3$ and the angle sum for pentagon $\alpha\beta^2\gamma\delta$ imply
\[
\alpha+2\beta=2\pi,\,
\gamma=\tfrac{2}{3}\pi,\,
\delta=\tfrac{4}{9}\pi,\,
f=36.
\]
If $\alpha<\beta$, then $\alpha<\frac{2}{3}\pi<\beta$, and the angle sum of $\alpha^a\beta^b\gamma^c\delta^d$ implies
\[
2>\tfrac{2}{3}b+\tfrac{2}{3}c+\tfrac{4}{9}d.
\]
This is a tighter constraint than $\{\alpha\beta^2\gamma\delta\colon \alpha\beta^2,\gamma^3,\delta^5\}$. By the similar argument, we find no vertices satisfying $2a<b$. The case of $\alpha>\beta$ can also be handled similarly, and we find no vertices satisfying $2a>b$. Therefore all optional vertices must satisfy $2a=b$, and we find $\alpha\beta^2,\gamma^3,\gamma\delta^3$ are all the vertices. Then we solve the angle counting equations, and get the full AVC on page \pageref{eg32}
\[
\{36\alpha\beta^2\gamma\delta\colon 36\alpha\beta^2,8\gamma^3,12\gamma\delta^3\}.
\]

Finally, for $\{\alpha\beta^2\gamma\delta\colon \alpha\beta^2,\gamma^3,\delta^4\}$, we have
\[
\alpha+2\beta=2\pi,\,
\gamma=\tfrac{2}{3}\pi,\,
\delta=\tfrac{1}{2}\pi,\,
f=24.
\]
By similar argument, we get the full AVC on page \pageref{eg33}
\[
\{24\alpha\beta^2\gamma\delta\colon 24\alpha\beta^2,8\gamma^3,6\delta^4\}.
\]

\medskip

\noindent{\bf Example 2}. $\{\alpha^2\beta\gamma\delta\colon \alpha\beta^2,\alpha^2\gamma\}$, no other degree $3$ vertices.

\medskip

By Lemma \ref{hdeg}, one of $\alpha\delta^3,\beta\delta^3,\gamma\delta^3,\delta^4,\delta^5$ is a vertex. The cases of $\alpha\delta^3$ and $\delta^5$ can be handled by Section \ref{routine1}, and the results are given on pages \pageref{eg21} and \pageref{eg25}. The cases of $\beta\delta^3$ and $\delta^4$ can be handled by Section \ref{routine2}, and the results are given on pages \pageref{eg22} and \pageref{eg24}. 

It remains to study $\{\alpha^2\beta\gamma\delta\colon \alpha\beta^2,\alpha^2\gamma,\gamma\delta^3\}$. The order matrix is still invertible. The angle sums of $\alpha\beta^2,\alpha^2\gamma,\gamma\delta^3$ and the angle sum for pentagon $\alpha^2\beta\gamma\delta$ imply
\[
\alpha=\tfrac{24}{f}\pi,\,
\beta=(1-\tfrac{12}{f})\pi,\,
\gamma=(2-\tfrac{48}{f})\pi,\,
\delta=\tfrac{16}{f}\pi.
\]
The difficulty of the case is that {\em two} angles have zero constant parts. 

By all angles positive, we get $f>24$. By all angles distinct, we get $f\ne 28,32,36$. By $f\ge 26$, we get lower bounds $\beta\ge \tfrac{7}{13}\pi$ and $\gamma\ge \tfrac{1}{13}\pi$. Then 
the angle sum of $\alpha^a\beta^b\gamma^c\delta^d$ implies $\tfrac{7}{13}b+\tfrac{1}{13}c\le 2$. Since there are too many possible choices of $(b,c)$ satisfying the inequality, we treat the first couple of $f$ separately, and then consider those $f$ with bigger lower bound. 

If $f\le 32$, then by $f>24$ and $f\ne 28,32$, we get two cases
\begin{align*}
f=26 &\colon
\alpha=\tfrac{12}{13}\pi,\,
\beta=\tfrac{7}{13}\pi,\,
\gamma=\tfrac{2}{13}\pi,\,
\delta=\tfrac{8}{13}\pi. \\
f=30 &\colon
\alpha=\tfrac{4}{5}\pi,\,
\beta=\tfrac{3}{5}\pi,\,
\gamma=\tfrac{2}{5}\pi,\,
\delta=\tfrac{8}{15}\pi.
\end{align*}
Then it is easy to find the corresponding optional vertices in Table \ref{2abcd||1a2b|2a1c|1c3d_vertex}.

For $f>32$ (and $f\ne 36$), we get lower bounds $\beta>\frac{5}{8}\pi$, $\gamma>\frac{1}{2}\pi$. Then the angle sum of $\alpha^a\beta^b\gamma^c\delta^d$ implies
\[
3a+2d=\tfrac{f}{8}(2-b-2c)+\tfrac{3}{2}(b+4c)
=\lambda f+\mu,\quad
2>\tfrac{5}{8}b+\tfrac{1}{2}c.
\]
Substituting all pairs $(b,c)$ of non-negative integers  satisfying the inequality into the formula for $3a+2d$, we get the following:
\begin{enumerate}
\item $\lambda=0$: This means $(b,c)=(2,0),(0,1)$. Solving the corresponding $3a+2d=3,6$, we get $(a,b,c,d)=(1,2,0,0),(2,0,1,0),(0,0,1,3)$. These correspond to the three existing vertices $\alpha\beta^2,\alpha^2\gamma,\gamma\delta^3$.
\item $\lambda<0$: This means $(b,c)=(2,1),(1,2), (1,1)$, and we get $3a+2d=-\frac{f}{4}+9,-\frac{3f}{2}+\frac{27}{2},-\frac{f}{2}+\frac{15}{2}$, respectively. By $3a+2d\ge 0$, we get upper bounds for $f$. Then for finitely many possible choices of $f>32$ and $f\ne 36$ within the upper bound, we find possible non-negative integer solutions of $a,d$. We end up with only two possibilities:
\begin{align*}
f=40, \, \gamma^2\delta &\colon 
\alpha=\tfrac{3}{5}\pi,\,
\beta=\tfrac{7}{10}\pi,\,
\gamma=\tfrac{4}{5}\pi,\,
\delta=\tfrac{2}{5}\pi; \\
f=44, \, \beta\gamma\delta &\colon 
\alpha=\tfrac{6}{11}\pi,\,
\beta=\tfrac{8}{11}\pi,\,
\gamma=\tfrac{10}{11}\pi,\,
\delta=\tfrac{4}{11}\pi.
\end{align*}
Then it is easy to find the corresponding optional vertices in Table \ref{2abcd||1a2b|2a1c|1c3d_vertex}.
\item $\lambda>0$: This means $(b,c)=(1,0),(0,0)$, and we get two families of optional vertices
\begin{align*}
\alpha^a\beta\delta^d
&\colon 3a+2d=\tfrac{f+12}{8},\, f=4(8); \\
\alpha^{a'}\delta^{d'}
&\colon 3a'+2d'=\tfrac{f}{4},\, f=0(4).
\end{align*}
Considering $f>24$, $f\ne 28,32,36$, that the cases $f=26,30,40,44$ are separately treated, and that $f=0(4)$ for the two vertex families above, this remaining case is only for $f\ge 48$. The vertices for this remaining case are the three existing vertices $\alpha\beta^2,\alpha^2\gamma,\gamma\delta^3$ and the two families above.
\end{enumerate}

\begin{table}[htp]
\centering
\scalebox{1}{
\begin{tabular}{|c |c |c |c |c |c |}
\hline
\multirow{2}{*}{number} & 
\multicolumn{4}{|c|}{vertex $\alpha^a\beta^b\gamma^c\delta^d$} &  
\multirow{2}{*}{condition} \\
\cline{2-5}
 & $a$ & $b$ & $c$ & $d$ &   \\
\hline \hline
$x_1$ & 1 & 2 & 0 & 0 &  \\
\cline{1-5} 
$x_2$ & 2 & 0 & 1 & 0 &  \\
\cline{1-5}
$x_3$ & 0 & 0 & 1 & 3 &  \\
\hline \hline  
$y_1$ & 1 & 0 & 3 & 1 & 
\multirow{7}{*}{$f=26$}  \\
\cline{1-5} 
$y_2$ & 0 & 2 & 2 & 1 &   \\
\cline{1-5} 
$y_3$ & 0 & 0 & 5 & 2 &  \\
\cline{1-5} 
$y_4$ & 1 & 0 & 7 & 0 &   \\
\cline{1-5} 
$y_5$ & 0 & 2 & 6 & 0 &   \\
\cline{1-5} 
$y_6$ & 0 & 0 & 9 & 1 &  \\
\cline{1-5} 
$y_7$ & 0 & 0 & 13 & 0 &  \\
\hline  
$y_1$ & 1 & 0 & 3 & 0 &  
\multirow{3}{*}{$f=30$}  \\
\cline{1-5} 
$y_2$ & 0 & 2 & 2 & 0 &  \\
\cline{1-5} 
$y_3$ & 0 & 0 & 5 & 0 &  \\
\hline  
$y_1$ & 0 & 0 & 2 & 1 & 
\multirow{3}{*}{$f=40$}  \\
\cline{1-5} 
$y_2$ & 2 & 0 & 0 & 2 &  \\
\cline{1-5} 
$y_3$ & 0 & 0 & 0 & 5 &  \\
\hline  
$y_1$ & 0 & 1 & 1 & 1 & 
\multirow{3}{*}{$f=44$}  \\
\cline{1-5} 
$y_2$ & 3 & 0 & 0 & 1 &   \\
\cline{1-5} 
$y_3$ & 1 & 0 & 0 & 4 &  \\
\hline \hline
$z_i$ & $a_i$ & 1 & 0 & $d_i$ &  
$\begin{array}{c} 3a_i+2d_i=\frac{f+12}{8} \\ f=4(8),f\ge 48\end{array}$ \\
\hline 
$z'_i$ & $a_i'$ & 0 & 0 & $d_i'$ &  
$\begin{array}{c} 3a'_i+2d'_i=\frac{f}{4} \\ f=0(4),f\ge 48\end{array}$ \\
\hline 
\end{tabular}
}
\caption{Vertices for $\{\alpha^2\beta\gamma\delta\colon \alpha\beta^2,\alpha^2\gamma,\gamma\delta^3\}$.}
\label{2abcd||1a2b|2a1c|1c3d_vertex}
\end{table}

By solving the angle counting equations, we get contradictions for $f=26,30$ and get full AVCs for $f=40,44$ on page \pageref{eg23}. For $f\ge 48$, the angle counting equations are
\begin{align*}
2f
&= x_1+2x_2+\ssum a_iz_i+\ssum a_i'z_i', \\
f
&= 2x_1+\ssum z_i, \\
f
&= x_2+x_3, \\
f
&= 3x_3+\ssum d_iz_i+\ssum d_i'z_i'.
\end{align*}
The solution is expressions of $x_i$ in terms of $f,z_i,z_i'$
\begin{align*}
x_1&=\tfrac{1}{2}f-\tfrac{1}{2}\ssum z_i, \\
x_2&=\tfrac{2}{3}f+\tfrac{1}{3}\ssum d_iz_i+\tfrac{1}{3}\ssum d'_iz'_i, \\
x_3&=\tfrac{1}{3}f-\tfrac{1}{3}\ssum d_iz_i-\tfrac{1}{3}\ssum d'_iz'_i,
\end{align*}
together with an equality involving only $f,z_i,z_i'$. To get this equality, we may start with the vertex counting equation \eqref{vcountf}
\[
\tfrac{f}{2}-6
=x_3+\ssum(a_i+d_i-2)z_i+\ssum(a_i'+d_i'-3)z_i.
\]
Substituting the formula for $x_3$, we get
\begin{align*}
\tfrac{f}{2}-\tfrac{f}{3}-6
&=\ssum(a_i+d_i-\tfrac{1}{3}d_i-2)z_i+\ssum(a_i'+d_i'-\tfrac{1}{3}d_i'-3)z_i \\
&=\tfrac{1}{24}\ssum(f-36)z_i+\tfrac{1}{12}\ssum(f-36)z_i.
\end{align*}
Canceling $f-36$ on both sides, we get
\[
\ssum z_i+2\ssum z'_i=4.
\]

\medskip

\noindent{\bf Example 3}. 
$\{\alpha^2\beta\gamma\delta\colon \alpha\beta\gamma,\alpha\delta^2\}$.

\medskip

The example is not covered by Proposition \ref{3complete}. According to Table \ref{deg3AVC}, for $\{\alpha^2\beta\gamma\delta\colon \alpha\beta\gamma,\alpha\delta^2\}$, we need to consider the possibility that $\beta^2\delta$ or $\beta^3$ appears as a vertex. Since the order matrix is invertible in these two cases, the routines in Sections \ref{routine1} and \ref{routine2} can be applied, and we omit the details.

Therefore we assume there are no other degree $3$ vertices. The order matrix
\[
(P\, X)=\begin{pmatrix}
2 & 1 & 1 \\
1 & 1 & 0 \\
1 & 1 & 0 \\
1 & 0 & 2 
\end{pmatrix}
\]
is singular due to lack of enough existing vertices. The angle sums of $\alpha\beta\gamma,\alpha\delta^2$ and the angle sum for $\alpha^2\beta\gamma\delta$ imply
\[
\alpha=\tfrac{8}{f}\pi,\,
\beta+\gamma=(2-\tfrac{8}{f})\pi,\,
\delta=(1-\tfrac{4}{f})\pi.
\]

By $f\ge 16$, we get
\[
\beta+\gamma\ge\tfrac{3}{2}\pi,\quad
\delta\ge \tfrac{3}{4}\pi,\quad
\beta+\gamma+\delta=3\delta>2\pi.
\]
Due to the symmetry of exchanging $\beta$ and $\gamma$, we may assume $\beta<\gamma$. Then $\beta<\delta<\gamma$.

By $\gamma>\delta>(1-\tfrac{4}{f})\pi=\frac{3}{4}\pi$, the angle sum of $\alpha^a\beta^b\gamma^c\delta^d$ implies
\[
2\ge \tfrac{3}{4}c+\tfrac{3}{4}d.
\]
Then we have $c+d\le 2$ and the following possible high degree vertices $(a,b,c,d)$:
\begin{enumerate}
\item $c+d=2$: By $\alpha+2\delta=2\pi$ and $\gamma>\delta$, we get $a=0$. Then by high degree, we get $b\ge 2$. Then by $\beta+\gamma+\delta\ge 2\pi$ and $\gamma>\delta$, the only possibility is $(0,b,0,2)$ with $b\ge 2$. 
\item $(c,d)=(0,1)$: We get $(a,b,0,1)$ with $a+b\ge 3$. 
\item $(c,d)=(1,0)$: By $\alpha+\beta+\gamma=2\pi$, either $a$ or $b$ is $0$. Then we get $(a,0,1,0)$ with $a\ge 3$, or $(0,b,1,0)$ with $b\ge 3$.
\item $(c,d)=(0,0)$: We get $(a,b,0,0)$ with $a+b\ge 4$. 
\end{enumerate}
In summary, the possible high degree vertices are the following
\[
(0,b,0,2), \,
(a,b,0,1), \,
(a,0,1,0), \,
(0,b,1,0), \,
(a,b,0,0).
\] 

The pentagon $\alpha^2\beta\gamma\delta$ implies that $\beta,\gamma$ appear the same number of times in the tiling. Since $\beta,\gamma$ appear the same number of times at existing vertices $\alpha\beta\gamma,\alpha\delta^2$, by Lemma \ref{counting}, we know that, among the five families of high degree vertices above, either only those with $b=c$ can appear, or at least one with $b<c$ must appear. 

Suppose only those with $b=c$ can appear. Then the high degree vertices are either $(a,0,0,1)$ with $a\ge 3$, or $(a,0,0,0)$ with $a\ge 4$. The angle sums of the vertices imply $a=\frac{f+4}{8}$ (i.e., $\alpha^{\frac{f+4}{8}}\delta$), or $a=\frac{f}{4}$ (i.e., $\alpha^{\frac{f}{4}}$). Then we solve the angle counting equations for $\{\alpha^2\beta\gamma\delta\colon \alpha\beta\gamma,\alpha\delta^2 \opt \alpha^{\frac{f+4}{8}}\delta,\alpha^{\frac{f}{4}}\}$ and get the full AVC
\begin{align*}
\{f\alpha^2\beta\gamma\delta
&\colon f\alpha\beta\gamma,\,
(\tfrac{1}{2}f-2+y_2)\alpha\delta^2 \opt 
(4-2y_2)\alpha^{\frac{f+4}{8}}\delta,\,
y_2\alpha^{\frac{f}{4}}\},  \\
&\alpha=\tfrac{8}{f}\pi,\,
\beta+\gamma=(2-\tfrac{8}{f})\pi,\,
\delta=(1-\tfrac{4}{f})\pi.   
\end{align*}

Suppose there is a high degree vertex with $b<c$. The only possibility in the list is $(a,0,1,0)$, $a\ge 3$. We use $g$ (another variable in addition to $f$) in place of $a$ and get an updated AVC $\{\alpha^2\beta\gamma\delta\colon \alpha\beta\gamma,\alpha\delta^2,\alpha^g\gamma\}$, in which all vertices are necessary. The updated order matrix
\[
(P\, X')=\begin{pmatrix}
2 & 1 & 1 & g \\
1 & 1 & 0 & 0 \\
1 & 1 & 0 & 1\\
1 & 0 & 2 & 0
\end{pmatrix}
\]
is now invertible. Then we may calculate all the angles
\[
\alpha=\tfrac{8}{f}\pi,\,
\beta=\tfrac{8(g-1)}{f}\pi,\,
\gamma=(2-\tfrac{8g}{f})\pi,\,
\delta=(1-\tfrac{4}{f})\pi.
\]
We have $\alpha<\beta$.

The remaining four families in the list above are the possible optional vertices. In fact, $(0,b,0,2)$ cannot be a vertex because $b\ge 2$ implies $b\beta+2\delta > \alpha+2\delta = 2\pi$. Moreover, $(0,b,1,0)$ is also not a vertex because $b\ge 3$ implies $b\beta+\gamma> 2\beta+\gamma > \alpha+\beta+\gamma=2\pi$. On the other hand, the angle sums of $(a,b,0,1)$ and $(a',b',0,0)$ imply
\begin{align*}
(a,b,0,1)
&\colon a+(g-1)b=\tfrac{f+4}{8},\, f=4(8). \\
(a',b',0,0)
&\colon a'+(g-1)b'=\tfrac{f}{4},\, f=0(4).
\end{align*}
Then we solve the angle counting equations similar to Example 2 of this section and get the full AVC 
\begin{align*}
\{
f\alpha^2\beta\gamma\delta &\colon 
x_1\alpha\beta\gamma,\,
x_2\alpha\delta^2,\,
x_3\alpha^g\gamma \opt 
y_i\alpha^{a_i}\beta^{b_i}\delta, \,
y_i'\alpha^{a'_i}\beta^{b'_i}
\},  \\
& 
a_i+(g-1)d_i=\tfrac{f+4}{8}, \,
a_i'+(g-1)d_i'=\tfrac{f}{4}, \\
& x_1 =f-\ssum b_iy_i-\ssum b_i'y_i',   \\
& x_2 =\tfrac{1}{2}f-\tfrac{1}{2}\ssum y_i,   \\
& x_3 =\ssum b_iy_i+\ssum b_i'y_i',   \\
& \ssum y_i+2\ssum y'_i=4, \\
&
\alpha=\tfrac{8}{f}\pi,\,
\beta=\tfrac{8(g-1)}{f}\pi,\,
\gamma=(2-\tfrac{8g}{f})\pi,\,
\delta=(1-\tfrac{4}{f})\pi.   
\end{align*}

\medskip

\noindent{\bf Example 4}. 
$\{\alpha\beta\gamma\delta\epsilon\colon \alpha\beta\gamma,\delta^3\}$, no other degree $3$ vertices.

\medskip

The example is not covered by Proposition \ref{3complete}, and the assumption of no other degree $3$ vertices follows from Table \ref{deg3AVC}. By Lemma \ref{hdeg} and up to the symmetry of exchanging $\alpha,\beta,\gamma$, we may assume that one of $\alpha\epsilon^3,\delta\epsilon^3,\epsilon^4,\epsilon^5$ is a vertex. 

Suppose $\alpha\epsilon^3$ is a vertex. The angle sums of $\alpha\beta\gamma,\delta^3,\alpha\epsilon^3$ and the angle sum for $\alpha\beta\gamma\delta\epsilon$ imply
\[
\alpha=(1-\tfrac{12}{f})\pi,\,
\beta+\gamma=(1+\tfrac{12}{f})\pi,\,
\delta=\tfrac{2}{3}\pi,\,
\epsilon=(\tfrac{1}{3}+\tfrac{4}{f})\pi.
\]
Using the positive lower bounds for $\alpha,\delta,\epsilon$, we may find optional high order vertices similar to Example 1 of this section. We omit the details.

Suppose $\epsilon^5$ is a vertex. The angle sums of $\alpha\beta\gamma,\delta^3,\epsilon^5$ and the angle sum for $\alpha\beta\gamma\delta\epsilon$ imply
\[
\alpha+\beta+\gamma=2\pi,\,
\delta=\tfrac{2}{3}\pi,\,
\epsilon=\tfrac{2}{5}\pi,\,
f=60.
\]
This is similar to $\{\alpha\beta^2\gamma\delta\colon \alpha\beta^2,\gamma^3,\delta^5\}$ in Example 1 of this section, except $\alpha+2\beta=2\pi$ is changed to $\alpha+\beta+\gamma=2\pi$. It is easy to see that $\delta^d\epsilon^e=\delta^3,\epsilon^5$. Therefore if $\alpha\beta\gamma$ is the only vertex involving the three angles, then $\alpha\beta\gamma,\delta^3,\epsilon^5$ are all the vertices, and we can easily get the full AVC
\begin{equation}\label{60a}
\{
60\alpha\beta\gamma\delta\epsilon \colon 
60\alpha\beta\gamma,\,
20\delta^3,\,
12\epsilon^5 \}.
\end{equation}

Next, we assume $\alpha,\beta,\gamma$ appear at vertices other than $\alpha\beta\gamma$. By applying Lemma \ref{counting} to any pair from $\alpha,\beta,\gamma$, we know each of them appears at some high degree vertices. By symmetry, we may further assume $\alpha<\beta<\gamma$. Then by $\alpha+\beta+\gamma=2\pi$ and $\delta=\frac{2}{3}\pi$, we know the angle sums of $\alpha\gamma^2\cdots,\beta\gamma^2\cdots,\beta^2\gamma\cdots,\beta\gamma\delta\cdots$ are $>2\pi$ and therefore cannot be vertices.

If $\gamma\cdots$ always has $\alpha$, then by no $\alpha\gamma^2\cdots$, and applying Lemma \ref{counting} to $\alpha,\gamma$, we get $\alpha\cdots=\gamma\cdots=\alpha\beta\gamma,\alpha\gamma\delta^d\epsilon^e$, with $d+e\ge 2$. With the exception of $(d,e)=(0,2)$, the angle sum of $\alpha\gamma\delta^d\epsilon^e$ implies $\beta=d\delta+e\epsilon\ge \pi$, contradicting $\alpha<\beta<\gamma$ and $\alpha+\beta+\gamma=2\pi$. Therefore the only high degree vertex $\gamma\cdots$ is $\alpha\gamma\epsilon^2$. The angle sum of $\alpha\gamma\epsilon^2$ further implies
\[
\alpha+\gamma=\tfrac{6}{5}\pi,\,
\beta=\tfrac{4}{5}\pi,\,
\delta=\tfrac{2}{3}\pi,\,
\epsilon=\tfrac{2}{5}\pi.
\]
Then we find $\beta^b\epsilon^e=\beta\epsilon^3$. We conclude $\alpha\beta\gamma,\delta^3,\epsilon^5,\alpha\gamma\epsilon^2,\beta\epsilon^3$ are all the vertices. Then we solve the angle counting equations and get the full AVC
\[
\{
60\alpha\beta\gamma\delta\epsilon \colon 
(60-y_1)\alpha\beta\gamma,\,
20\delta^3,\,
(12-y_1)\epsilon^5,\,
y_1\alpha\gamma\epsilon^2\opt 
y_1\beta\epsilon^3\}.
\]
Note that $y_1=0$ reduces to a special case (due to $\beta=\frac{4}{5}\pi$) of \eqref{60a}. Moreover, $y_1>0$ belongs to the case $\{\alpha\beta\gamma\delta\epsilon \colon \alpha\beta\gamma,\delta^3,\alpha\epsilon^3 \}$ after exchanging $\alpha$ and $\beta$.

If there is a vertex $\gamma\cdots$ without $\alpha$, then the vertex has high degree. By the similar angle sum consideration, the vertex is $\gamma\epsilon^3$. After exchanging $\alpha$ and $\gamma$, we are in the case $\{\alpha\beta\gamma\delta\epsilon \colon \alpha\beta\gamma,\delta^3,\alpha\epsilon^3 \}$ .

We conclude that the full AVC derived from $\{\alpha\beta\gamma\delta\epsilon\colon \alpha\beta\gamma,\delta^3,\epsilon^5\}$ is either \eqref{60a}, or belongs to the full AVC derived from $\{\alpha\beta\gamma\delta\epsilon \colon \alpha\beta\gamma,\delta^3,\alpha\epsilon^3 \}$ after some exchange among $\alpha,\beta,\gamma$.

Suppose $\delta\epsilon^3$ is a vertex. The angle sums of $\alpha\beta\gamma,\delta^3,\delta\epsilon^3$ and the angle sum for $\alpha\beta\gamma\delta\epsilon$ imply
\[
\alpha+\beta+\gamma=2\pi,\,
\delta=\tfrac{2}{3}\pi,\,
\epsilon=\tfrac{4}{9}\pi,\,
f=36.
\]
Like Example 1 in this section, this is tighter than the case $f=60$. In fact, the similar argument shows that, if $\alpha<\beta<\gamma$, then $\gamma\cdots=\alpha\beta\gamma,\alpha\gamma\epsilon^2$. By applying Lemma \ref{counting} to $\alpha,\gamma$, this implies $\alpha\cdots=\alpha\beta\gamma,\alpha\gamma\epsilon^2$. 

The angle sum of $\alpha\gamma\epsilon^2$ further implies 
\[
\alpha+\gamma=\tfrac{10}{9}\pi,\,
\beta=\tfrac{8}{9}\pi,\,
\delta=\tfrac{2}{3}\pi,\,
\epsilon=\tfrac{4}{9}\pi.
\]
By $\alpha\cdots=\gamma\cdots=\alpha\beta\gamma,\alpha\gamma\epsilon^2$, we know $\beta\cdots=\alpha\beta\gamma$ or $\beta\cdots$ has no $\alpha,\gamma$. By the angle values, we get $\beta\cdots=\alpha\beta\gamma,\beta\delta\epsilon$. Since $\alpha\beta\gamma,\delta^3$ are all the degree $3$ vertices, we get $\beta\cdots=\alpha\beta\gamma$. Then applying Lemma \ref{counting} to $\beta,\gamma$, we get a contradiction. 

Therefore $\alpha\gamma\epsilon^2$ is not a vertex, and $\alpha\beta\gamma,\delta^3,\delta\epsilon^3$ are all the vertices. Then we may further calculate the numbers of vertices and get the full AVC
\begin{equation}\label{36a}
\{
36\alpha\beta\gamma\delta\epsilon \colon 
36\alpha\beta\gamma,\,
8\delta^3,\,
12\delta\epsilon^3 \}.
\end{equation}

Finally, suppose $\epsilon^4$ is a vertex. The angle sums of $\alpha\beta\gamma,\delta^3,\epsilon^4$ and the angle sum for $\alpha\beta\gamma\delta\epsilon$ imply
\[
\alpha+\beta+\gamma=2\pi,\,
\delta=\tfrac{2}{3}\pi,\,
\epsilon=\tfrac{1}{2}\pi,\,
f=24.
\]
Similar to the cases $f=60,36$, we find $\alpha\beta\gamma,\delta^3,\epsilon^4$ are the only vertices, and the full AVC is
\begin{equation}\label{24a}
\{
24\alpha\beta\gamma\delta\epsilon \colon 
24\alpha\beta\gamma,\,
8\delta^3,\,
6\epsilon^4 \}.
\end{equation}

\section{Classification of AVCs}
\label{classify}

\subsection{Up to Three Distinct Angles at Degree $3$ Vertices}
\label{classify3}

Starting from Table \ref{3completeAVC} and applying the techniques in Sections \ref{routine1}, \ref{routine2}, \ref{routine3}, we get the complete list of all the full AVCs with up to three distinct angles at degree $3$ vertices. One should keep the following in mind while reading the subsequent list.
\begin{enumerate}
\item The list assumes $f\ge 16$ (equivalent to $f\ne 12$). It contains all AVCs with up to three distinct angles, and all AVCs with four distinct angles but one angle not appearing at degree $3$ vertices.
\item All coefficients in the necessary part (i.e., before the divider $\opt$) are positive integers. All coefficients in the optional part (i.e., after the divider $\opt$) are non-negative integers. 
\item All angles are distinct. This implies that $f$ should not take certain specific values. Although allowing angles to be equal can still give an AVC, such an AVC is included in another AVC in the list.
\item All optional vertices should have non-negative integer exponents and have degree $\ge 4$. This implies that, for the corresponding coefficient to be nonzero, $f$ should satisfy some modulus condition and has some lower bound. 
\end{enumerate}

The complete list is headed by degree $3$ vertices listed in Table \ref{3completeAVC}, and a vertex of degree $4$ or $5$ in case there is an extra angle not appearing at degree $3$ vertices.

\begin{itemize}
\item $\{\alpha^3\}$: The AVCs from $\{\alpha^3\}$ always include an extra angle $\beta$ not appearing at degree $3$ vertices.
\begin{itemize}
\item 
$\{
24\alpha^4\beta \colon 
32\alpha^3,\,
6\beta^4
\}, \,
\alpha=\tfrac{2}{3}\pi,\,
\beta=\tfrac{1}{2}\pi$.
\item 
$\{
36\alpha^4\beta \colon 
44\alpha^3,\,
12\alpha\beta^3
\}, \,
\alpha=\tfrac{2}{3}\pi,\,
\beta=\tfrac{4}{9}\pi$.
\item
$\{
60\alpha^4\beta \colon 
80\alpha^3,\,
12\beta^5
\}, \,
\alpha=\tfrac{2}{3}\pi,\,
\beta=\tfrac{2}{5}\pi$.
\end{itemize}
\item $\{\alpha\beta^2\}$: \label{eg28}
\begin{itemize}
\item 
$\{
f\alpha^2\beta^3 \colon 
(\tfrac{3}{2}f-2+y_2)\alpha\beta^2 \opt 
(4-2y_2)\alpha^{\frac{f+4}{8}}\beta,\,
y_2\alpha^{\frac{f}{4}}
\}, 
\\
\alpha=\tfrac{8}{f}\pi,\,
\beta=(1-\tfrac{4}{f})\pi$.
\end{itemize}
\item $\{\alpha\beta\gamma,\alpha^3\}$:
\begin{itemize}
\item 
$\{
24\alpha^2\beta^2\gamma \colon 
24\alpha\beta\gamma,\,
8\alpha^3,\,
6\beta^4 
\}, \,
\alpha=\tfrac{2}{3}\pi,\,
\beta=\tfrac{1}{2}\pi,\,
\gamma=\tfrac{5}{6}\pi$.
\item 
$\{
36\alpha^2\beta^2\gamma \colon 
36\alpha\beta\gamma,\,
8\alpha^3,\,
12\alpha\beta^3
\}, \,
\alpha=\tfrac{2}{3}\pi,\,
\beta=\tfrac{4}{9}\pi,\,
\gamma=\tfrac{8}{9}\pi$.
\item 
$\{
60\alpha^2\beta^2\gamma \colon 
60\alpha\beta\gamma,\,
20\alpha^3,\,
12\beta^5
\}, \,
\alpha=\tfrac{2}{3}\pi,\,
\beta=\tfrac{2}{5}\pi,\,
\gamma=\tfrac{14}{15}\pi$. 
\end{itemize}
\item $\{\alpha\beta\gamma,\alpha^3,\alpha\delta^3\}$:
\begin{itemize}
\item 
$\{
36\alpha^2\beta\gamma\delta \colon 
36\alpha\beta\gamma,\,
8\alpha^3,\,
12\alpha\delta^3 
\}, \,
\alpha=\tfrac{2}{3}\pi,\,
\beta+\gamma=\tfrac{4}{3}\pi,\,
\gamma=\tfrac{4}{9}\pi$. 
\end{itemize}
\item $\{\alpha\beta\gamma,\alpha^3,\beta\delta^3\}$:\label{eg11}
\begin{itemize}
\item 
$\{
48\alpha^2\beta\gamma\delta \colon 
36\alpha\beta\gamma,\,
20\alpha^3,\,
12\beta\delta^3 \opt
6\gamma^2\delta^2
\},
\\
\alpha=\tfrac{2}{3}\pi,\,
\beta=\tfrac{3}{4}\pi,\,
\gamma=\tfrac{7}{12}\pi,\,
\delta=\tfrac{5}{12}\pi$.
\item
$\{
60\alpha^2\beta\gamma\delta \colon 
(60-y_1-3y_2)\alpha\beta\gamma,\,
(20+y_2)\alpha^3,\,
(y_1+3y_2)\beta\delta^3 
\\
\opt 
y_1\alpha\gamma\delta^2,\,
y_2\gamma^3\delta,\,
(12-y_1-2y_2)\delta^5
\}, 
\\
\alpha=\tfrac{2}{3}\pi,\,
\beta=\tfrac{4}{5}\pi,\,
\gamma=\tfrac{18}{15}\pi,\,
\delta=\tfrac{2}{5}\pi$.
\item
$\{
72\alpha^2\beta\gamma\delta \colon 
48\alpha\beta\gamma,\,
32\alpha^3,\,
24\beta\delta^3 \opt
6\gamma^4
\}, 
\\
\alpha=\tfrac{2}{3}\pi,\,
\beta=\tfrac{5}{6}\pi,\,
\gamma=\tfrac{1}{2}\pi,\,
\delta=\tfrac{7}{18}\pi$.
\item
$\{
84\alpha^2\beta\gamma\delta \colon 
(72-y_1)\alpha\beta\gamma,\,
32\alpha^3,\,
(12+y_1)\beta\delta^3 \opt
y_1\alpha\gamma^3,\,
(12-y_1)\gamma^2\delta^3
\},
\\
\alpha=\tfrac{2}{3}\pi,\,
\beta=\tfrac{6}{7}\pi,\,
\gamma=\tfrac{10}{21}\pi,\,
\delta=\tfrac{8}{21}\pi$.
\item
$\{
108\alpha^2\beta\gamma\delta \colon 
(84-y_1)\alpha\beta\gamma,\,
44\alpha^3,\,
(24+y_1)\beta\delta^3 
\\
\opt
y_1\alpha\gamma^2\delta,\,
(12-y_1)\gamma\delta^4
\}, 
\\
\alpha=\tfrac{2}{3}\pi,\,
\beta=\tfrac{8}{9}\pi,\,
\gamma=\tfrac{4}{9}\pi,\,
\delta=\tfrac{10}{27}\pi$.
\item
$\{
132\alpha^2\beta\gamma\delta \colon 
96\alpha\beta\gamma,\,
56\alpha^3,\,
36\beta\delta^3 \opt
12\gamma^3\delta^2
\}, 
\\
\alpha=\tfrac{2}{3}\pi,\,
\beta=\tfrac{10}{11}\pi,\,
\gamma=\tfrac{14}{33}\pi,\,
\delta=\tfrac{4}{11}\pi$.
\item
$\{
156\alpha^2\beta\gamma\delta \colon 
108\alpha\beta\gamma,\,
68\alpha^3,\,
48\beta\delta^3 \opt
12\gamma^4\delta
\}, 
\\
\alpha=\tfrac{2}{3}\pi,\,
\beta=\tfrac{12}{13}\pi,\,
\gamma=\tfrac{16}{39}\pi,\,
\delta=\tfrac{14}{39}\pi$.
\item
$\{
180\alpha^2\beta\gamma\delta \colon 
120\alpha\beta\gamma,\,
80\alpha^3,\,
60\beta\delta^3 \opt
12\gamma^5
\}, 
\\
\alpha=\tfrac{2}{3}\pi,\,
\beta=\tfrac{14}{15}\pi,\,
\gamma=\tfrac{2}{5}\pi,\,
\delta=\tfrac{16}{45}\pi$.
\end{itemize}
\item $\{\alpha\beta\gamma,\alpha^3,\delta^4\}$:
\begin{itemize}
\item 
$\{
24\alpha^2\beta\gamma\delta \colon 
24\alpha\beta\gamma,\,
8\alpha^3,\,
6\delta^4
\}, \,
\alpha=\tfrac{2}{3}\pi,\,
\beta+\gamma=\tfrac{4}{3}\pi,\,
\delta=\tfrac{1}{2}\pi$. 
\end{itemize}
\item $\{\alpha\beta\gamma,\alpha^3,\delta^5\}$:
\begin{itemize}
\item 
$\{
60\alpha^2\beta\gamma\delta \colon 
60\alpha\beta\gamma,\,
20\alpha^3,
12\delta^5
\}, \,
\alpha=\tfrac{2}{3}\pi,\,
\beta+\gamma=\tfrac{4}{3}\pi,\,
\delta=\tfrac{2}{5}\pi$.
\end{itemize}
\item $\{\alpha\beta^2,\alpha^2\gamma\}$: \label{eg27}
\begin{itemize}
\item
$\{
f\alpha^3\beta\gamma \colon 
(\tfrac{1}{2}f-2+y_2)\alpha\beta^2,\,
f\alpha^2\gamma \opt 
(4-2y_2)\alpha^{\frac{f+4}{8}}\beta,\,
y_2\alpha^{\frac{f}{4}}
\}, 
\\
\alpha=\tfrac{8}{f}\pi,\,
\beta=(1-\tfrac{4}{f})\pi,\,
\gamma=(2-\tfrac{16}{f})\pi$.
\item
$\{
f\alpha^2\beta^2\gamma \colon 
x_1\alpha\beta^2,\,
x_2\alpha^2\gamma 
\\
\opt 
y_1\alpha\beta\gamma^{\frac{f+4}{16}},\,
y_2\beta^3\gamma^{\frac{f-12}{16}},\,
y_3\alpha\gamma^{\frac{f+4}{8}},\,
y_4\beta^2\gamma^{\frac{f-4}{8}},\,
y_5\beta\gamma^{\frac{3f-4}{16}},\,
y_6\gamma^{\frac{f}{4}}
\},  
\\
x_1=f
-\tfrac{1}{2}y_1
-\tfrac{3}{2}y_2
-y_4
-\tfrac{1}{2}y_5, 
\\
x_2=\tfrac{1}{2}f
-\tfrac{1}{4}y_1
+\tfrac{3}{4}y_2
-\tfrac{1}{2}y_3
+\tfrac{1}{2}y_4
+\tfrac{1}{4}y_5, 
\\
y_1+y_2+2y_3+2y_4+3y_5+4y_6=8, 
\\
\alpha=(1-\tfrac{4}{f})\pi,\,
\beta=(\tfrac{1}{2}+\tfrac{2}{f})\pi,\,
\gamma=\tfrac{8}{f}\pi$. 
\item 
$\{
32\alpha^2\beta\gamma^2 \colon 
16\alpha\beta^2,\,
24\alpha^2\gamma \opt 
10\gamma^4
\}, \,
\alpha=\tfrac{3}{4}\pi,\,
\beta=\tfrac{5}{8}\pi,\,
\gamma=\tfrac{1}{2}\pi$. 
\item
$\{
52\alpha^2\beta\gamma^2 \colon 
16\alpha\beta^2,\,
44\alpha^2\gamma \opt 
20\beta\gamma^3
\}, \,
\alpha=\tfrac{10}{13}\pi,\,
\beta=\tfrac{8}{13}\pi,\,
\gamma=\tfrac{6}{13}\pi$. 
\end{itemize}
\item $\{\alpha\beta^2, \alpha^2\gamma,\alpha\delta^3\}$: \label{eg21}
\begin{itemize}
\item 
$\{
56\alpha^2\beta\gamma\delta \colon 
28\alpha\beta^2,\,
36\alpha^2\gamma,\,
12\alpha\delta^3 \opt
10\gamma^2\delta^2
\}, 
\\
\alpha=\tfrac{22}{35}\pi,\,
\beta=\tfrac{24}{35}\pi,\,
\gamma=\tfrac{26}{35}\pi,\,
\delta=\tfrac{16}{35}\pi$.
\item
$\{
76\alpha^2\beta\gamma\delta \colon 
(28+y_2)\alpha\beta^2,\,
(56-y_2)\alpha^2\gamma,\,
(12+y_2)\alpha\delta^3 
\\
\opt 
(20-2y_2)\beta\gamma\delta^2,\,
y_2\gamma^3\delta
\}
\\
\alpha=\tfrac{14}{19}\pi,\,
\beta=\tfrac{12}{19}\pi,\,
\gamma=\tfrac{10}{19}\pi,\,
\delta=\tfrac{8}{19}\pi$.
\item
$\{
96\alpha^2\beta\gamma\delta \colon 
48\alpha\beta^2,\,
56\alpha^2\gamma,\,
32\alpha\delta^3 \opt
10\gamma^4
\}, 
\\
\alpha=\tfrac{3}{4}\pi,\,
\beta=\tfrac{5}{8}\pi,\,
\gamma=\tfrac{1}{2}\pi,\,
\delta=\tfrac{5}{12}\pi$.
\item
$\{
116\alpha^2\beta\gamma\delta \colon 
48\alpha\beta^2,\,
76\alpha^2\gamma,\,
32\alpha\delta^3 \opt
20\beta\gamma^2\delta
\}, 
\\
\alpha=\tfrac{22}{29}\pi,\,
\beta=\tfrac{18}{29}\pi,\,
\gamma=\tfrac{14}{29}\pi,\,
\delta=\tfrac{12}{29}\pi$.
\item
$\{
156\alpha^2\beta\gamma\delta \colon 
68\alpha\beta^2,\,
96\alpha^2\gamma,\,
52\alpha\delta^3 \opt
20\beta\gamma^3
\}, 
\\
\alpha=\tfrac{10}{13}\pi,\,
\beta=\tfrac{8}{13}\pi,\,
\gamma=\tfrac{6}{13}\pi,\,
\delta=\tfrac{16}{39}\pi$.
\end{itemize}
\item $\{\alpha\beta^2, \alpha^2\gamma,\beta\delta^3\}$: \label{eg22}
\begin{itemize}
\item 
$\{
f\alpha^2\beta\gamma\delta \colon 
x_1\alpha\beta^2, \,
x_2\alpha^2\gamma, \,
x_3\beta\delta^3 
\\
\opt 
y_1\beta\gamma^{\frac{f-4}{48}}\delta^2, \,
y_2\alpha\gamma^{\frac{f+28}{48}}\delta, \,
y_3\beta^2\gamma^{\frac{f-20}{48}}\delta, \,
y_4\alpha\beta\gamma^{\frac{f+12}{48}}, \,
\\
{}\quad 
y_5\gamma^{\frac{f+12}{48}}\delta^3, 
y_6\beta^3\gamma^{\frac{f-36}{48}}, \,
y_7\beta\gamma^{\frac{f-4}{24}}\delta, \,
y_8\alpha\gamma^{\frac{f+12}{24}}, 
\\
{}\quad 
y_9\beta^2\gamma^{\frac{f-12}{24}}, \,
y_{10}\gamma^{\frac{f+4}{24}}\delta^2, \,
y_{11}\beta\gamma^{\frac{f-4}{16}}, \,
y_{12}\gamma^{\frac{f}{12}}
\},
\\
x_1 
=\tfrac{1}{3}f
-\tfrac{1}{6}y_1
+\tfrac{1}{6}y_2
-\tfrac{5}{6}y_3
-\tfrac{1}{2}y_4
+\tfrac{1}{2}y_5
-\tfrac{3}{2}y_6 
\\ 
{}\qquad -\tfrac{1}{3}y_7
-y_9
+\tfrac{1}{3}y_{10}
-\tfrac{1}{2}y_{11}, 
\\
x_2 
=\tfrac{5}{6}f
+\tfrac{1}{12}y_1
-\tfrac{7}{12}y_2
+\tfrac{5}{12}y_3
-\tfrac{1}{4}y_4
-\tfrac{1}{4}y_5
+\tfrac{3}{4}y_6 
\\
{}\qquad+\tfrac{1}{6}y_7
-\tfrac{1}{2}y_8
+\tfrac{1}{12}y_9
-\tfrac{1}{6}y_{10}
+\tfrac{1}{4}y_{11},
\\
x_3 
=\tfrac{1}{3}f
-\tfrac{2}{3}y_1
-\tfrac{1}{3}y_2
-\tfrac{1}{3}y_3
-y_5
-\tfrac{1}{3}y_7
-\tfrac{2}{3}y_{10}, 
\\
y_1+\cdots+y_6+2(y_7+\cdots+y_{10})+3y_{11}+4y_{12}=8, 
\\
\alpha=(1-\tfrac{12}{f})\pi,\,
\beta=(\tfrac{1}{2}+\tfrac{6}{f})\pi,\,
\gamma=\tfrac{24}{f}\pi,\,
\delta=(\tfrac{1}{2}-\tfrac{2}{f})\pi$.
\end{itemize}
\item $\{\alpha\beta^2, \alpha^2\gamma,\gamma\delta^3\}$: \label{eg23}
\begin{itemize}
\item 
$\{
40\alpha^2\beta\gamma\delta \colon 
20\alpha\beta^2,\,
(30-y_2)\alpha^2\gamma,\,
(14-y_2-2y_3)\gamma\delta^3 
\\
\opt 
(y_2+y_3-2)\gamma^2\delta,\,
y_2\alpha^2\delta^2,\,
y_3\delta^5
\},
\\
\alpha=\tfrac{3}{5}\pi,\,
\beta=\tfrac{7}{10}\pi,\,
\gamma=\tfrac{4}{5}\pi,\,
\delta=\tfrac{2}{5}\pi$.
\item
$\{
44\alpha^2\beta\gamma\delta \colon 
(24-y_2-y_3)\alpha\beta^2, \,
(32-y_2)\alpha^2\gamma, \,
(16-y_2-2y_3)\gamma\delta^3 
\\
\opt 
(2y_2+2y_3-4)\beta\gamma\delta, \,
y_2\alpha^3\delta, \,
y_3\alpha\delta^4
\},
\\
\alpha=\tfrac{6}{11}\pi,\,
\beta=\tfrac{8}{11}\pi,\,
\gamma=\tfrac{10}{11}\pi,\,
\delta=\tfrac{4}{11}\pi$.
\item
$\{
f\alpha^2\beta\gamma\delta \colon 
x_1\alpha\beta^2, \,
x_2\alpha^2\gamma, \,
x_3\gamma\delta^3 \opt 
y_i\alpha^{a_i}\beta\delta^{d_i}, \,
y_i'\alpha^{a_i'}\delta^{d_i'}
\}, 
\\ 
3a_i+2d_i=\tfrac{f+12}{8}, \,
3a_i'+2d_i'=\tfrac{f}{4},\,
f\ge 48,
\\
x_1=\tfrac{1}{2}f-\tfrac{1}{2}\ssum y_i, 
\\
x_2=\tfrac{2}{3}f+\tfrac{1}{3}\ssum d_iy_i+\tfrac{1}{3}\ssum d'_iy'_i, 
\\
x_3=\tfrac{1}{3}f-\tfrac{1}{3}\ssum d_iy_i-\tfrac{1}{3}\ssum d'_iy'_i, 
\\ 
\ssum y_i+2\ssum y'_i=4,
\\
\alpha=\tfrac{24}{f}\pi,\,
\beta=(1-\tfrac{12}{f})\pi,\,
\gamma=(2-\tfrac{48}{f})\pi,\,
\delta=\tfrac{16}{f}\pi$.
\end{itemize}
\item $\{\alpha\beta^2, \alpha^2\gamma,\delta^4\}$: \label{eg24}
\begin{itemize}
\item 
$\{
f\alpha^2\beta\gamma\delta \colon 
x_1\alpha\beta^2, \,
x_2\alpha^2\gamma, \,
x_3\delta^4 
\\
\opt 
y_1\gamma^{\frac{f}{32}}\delta^3, \,
y_2\beta\gamma^{\frac{f-8}{32}}\delta^2, \,
y_3\alpha\gamma^{\frac{f+16}{32}}\delta, \,
y_4\beta^2\gamma^{\frac{f-16}{32}}\delta, 
\\
{}\quad 
y_5\alpha\beta\gamma^{\frac{f+8}{32}}, \,
y_6\beta^3\gamma^{\frac{f-24}{32}}, \,
y_7\gamma^{\frac{f}{16}}\delta^2, \,
y_8\beta\gamma^{\frac{f-4}{16}}\delta, 
\\
{}\quad 
y_9\alpha\gamma^{\frac{f+8}{16}}, \,
y_{10}\beta^2\gamma^{\frac{f-8}{16}}, \,
y_{11}\gamma^{\frac{3f}{32}}\delta, \,
y_{12}\beta\gamma^{\frac{3f-8}{32}}, \,
y_{13}\gamma^{\frac{f}{8}}
\}, 
\\
x_1 
=\tfrac{1}{2}f
-\tfrac{1}{2}y_2
-y_4
-\tfrac{1}{2}y_5
-\tfrac{3}{2}y_6
-\tfrac{1}{2}y_8
-y_{10}
-\tfrac{1}{2}y_{12},
\\
x_2 
=\tfrac{3}{4}f
+\tfrac{1}{4}y_2
-\tfrac{1}{2}y_3
+\tfrac{1}{2}y_4
-\tfrac{1}{4}y_5
+\tfrac{3}{4}y_6
+\tfrac{1}{4}y_8
-\tfrac{1}{2}y_9
+\tfrac{1}{2}y_{10}
+\tfrac{1}{4}y_{12},
\\ 
x_3 
=\tfrac{1}{4}f
-\tfrac{3}{4}y_1
-\tfrac{1}{2}y_2
-\tfrac{1}{4}y_3
-\tfrac{1}{4}y_4
-\tfrac{1}{2}y_7
-\tfrac{1}{4}y_8
-\tfrac{3}{4}y_{11}, 
\\ 
y_1+\cdots+y_6+2(y_7+\cdots+y_{10})+3(y_{11}+y_{12})+4y_{13}=8, 
\\
\alpha=(1-\tfrac{8}{f})\pi,\,
\beta=(\tfrac{1}{2}+\tfrac{4}{f})\pi,\,
\gamma=\tfrac{16}{f}\pi,\,
\delta=\tfrac{1}{2}\pi$.
\end{itemize}
\item $\{\alpha\beta^2, \alpha^2\gamma,\delta^5\}$: \label{eg25}
\begin{itemize}
\item 
$\{
40\alpha^2\beta\gamma\delta \colon 
20\alpha\beta^2,\,
(30-y_1)\alpha^2\gamma,\,
(2-y_1)\delta^5 \opt
y_1\alpha^2\delta^2,\,
(10+y_1)\gamma\delta^3
\}, 
\\
\alpha=\tfrac{3}{5}\pi,\,
\beta=\tfrac{7}{10}\pi,\,
\gamma=\tfrac{4}{5}\pi,\,
\delta=\tfrac{2}{5}\pi$. 
\\
$\{
80\alpha^2\beta\gamma\delta \colon 
40\alpha\beta^2,\,
60\alpha^2\gamma,\,
12\delta^5 \opt
10\gamma^2\delta^2
\}, 
\\
\alpha=\tfrac{7}{10}\pi,\,
\beta=\tfrac{13}{20}\pi,\,
\gamma=\tfrac{3}{5}\pi,\,
\delta=\tfrac{2}{5}\pi$. 
\\
$\{
100\alpha^2\beta\gamma\delta \colon 
40\alpha\beta^2,\,
80\alpha^2\gamma,\,
12\delta^5 \opt
20\beta\gamma\delta^2
\}, 
\\
\alpha=\tfrac{18}{25}\pi,\,
\beta=\tfrac{16}{25}\pi,\,
\gamma=\tfrac{14}{25}\pi,\,
\delta=\tfrac{2}{5}\pi$. 
\\
$\{
120\alpha^2\beta\gamma\delta \colon 
60\alpha\beta^2,\,
90\alpha^2\gamma,\,
22\delta^5 \opt
10\gamma^3\delta
\}, 
\\
\alpha=\tfrac{11}{15}\pi,\,
\beta=\tfrac{19}{30}\pi,\,
\gamma=\tfrac{8}{15}\pi,\,
\delta=\tfrac{2}{5}\pi$. 
\\
$\{
160\alpha^2\beta\gamma\delta \colon 
80\alpha\beta^2,\,
120\alpha^2\gamma,\,
32\delta^5 \opt
10\gamma^4
\}, 
\\
\alpha=\tfrac{3}{4}\pi,\,
\beta=\tfrac{5}{8}\pi,\,
\gamma=\tfrac{1}{2}\pi,\,
\delta=\tfrac{2}{5}\pi$. 
\\
$\{
180\alpha^2\beta\gamma\delta \colon 
80\alpha\beta^2,\,
140\alpha^2\gamma,\,
32\delta^5 \opt
20\beta\gamma^2\delta
\}, 
\\
\alpha=\tfrac{34}{45}\pi,\,
\beta=\tfrac{28}{45}\pi,\,
\gamma=\tfrac{22}{45}\pi,\,
\delta=\tfrac{2}{5}\pi$. 
\\
$\{
260\alpha^2\beta\gamma\delta \colon 
120\alpha\beta^2,\,
200\alpha^2\gamma,\,
52\delta^5 \opt
20\beta\gamma^3
\}, 
\\
\alpha=\tfrac{10}{13}\pi,\,
\beta=\tfrac{8}{13}\pi,\,
\gamma=\tfrac{6}{13}\pi,\,
\delta=\tfrac{2}{5}\pi$.
\end{itemize}
\item $\{\alpha\beta^2,\gamma^3\}$: \label{eg29}
\begin{itemize}
\item
$\{
24\alpha^2\beta^2\gamma \colon 
24\alpha\beta^2,\,
8\gamma^3 \opt 
6\alpha^4
\}, \,
\alpha=\tfrac{1}{2}\pi,\,
\beta=\tfrac{3}{4}\pi,\,
\gamma=\tfrac{2}{3}\pi$.
\item
$\{
36\alpha^2\beta^2\gamma \colon 
36\alpha\beta^2,\,
8\gamma^3 \opt 
12\alpha^3\gamma
\}, \,
\alpha=\tfrac{4}{9}\pi,\,
\beta=\tfrac{7}{9}\pi,\,
\gamma=\tfrac{2}{3}\pi$.
\item
$\{
60\alpha^2\beta^2\gamma \colon 
(48+y_2)\alpha\beta^2,\,
20\gamma^3 \opt 
(24-2y_2)\alpha^3\beta,\,
y_2\alpha^5
\}, 
\\
\alpha=\tfrac{2}{5}\pi,\,
\beta=\tfrac{4}{5}\pi,\,
\gamma=\tfrac{2}{3}\pi$.
\item
$\{
48\alpha^2\beta\gamma^2 \colon 
24\alpha\beta^2,\,
32\gamma^3 \opt 
18\alpha^4
\}, \,
\alpha=\tfrac{1}{2}\pi,\,
\beta=\tfrac{3}{4}\pi,\,
\gamma=\tfrac{2}{3}\pi$.
\item
$\{
24\alpha\beta^3\gamma \colon 
24\alpha\beta^2,\,
8\gamma^3 \opt 
6\beta^4
\}, \,
\alpha=\pi,\,
\beta=\tfrac{1}{2}\pi,\,
\gamma=\tfrac{2}{3}\pi$. 
\item
$\{
36\alpha\beta^3\gamma \colon 
36\alpha\beta^2,\,
8\gamma^3 \opt 
12\beta^3\gamma
\}, \,
\alpha=\tfrac{10}{9}\pi,\,
\beta=\tfrac{4}{9}\pi,\,
\gamma=\tfrac{2}{3}\pi$.
\item
$\{
60\alpha\beta^3\gamma \colon 
60\alpha\beta^2,\,
20\gamma^3 \opt 
12\beta^5
\}, \,
\alpha=\tfrac{6}{5}\pi,\,
\beta=\tfrac{2}{5}\pi,\,
\gamma=\tfrac{2}{3}\pi$.
\item 
$\{
f\alpha\beta\gamma^3 \colon 
x_1\alpha\beta^2,\,
x_2\gamma^3 \opt 
y_1\alpha^{\frac{f+12}{24}}\beta\gamma,\,
y_2\alpha^{\frac{f}{12}}\gamma^2,\,
y_3\alpha^{\frac{f+4}{8}}\beta,\,
y_4\alpha^{\frac{f}{6}}\gamma,\,
y_5\alpha^{\frac{f}{4}}
\}, 
\\
x_1=\tfrac{1}{2}f
-\tfrac{1}{2}y_1
-\tfrac{1}{2}y_3, 
\\
x_2=f
-\tfrac{1}{3}y_1
-\tfrac{2}{3}y_2
-\tfrac{1}{3}y_4, 
\\
y_1+2y_2+3y_3+4y_4+6y_5=12, 
\\
\alpha=\tfrac{8}{f}\pi,\,
\beta=(1-\tfrac{4}{f})\pi,\,
\gamma=\tfrac{2}{3}\pi$.
\end{itemize}
\item $\{\alpha\beta^2,\gamma^3,\alpha\delta^3\}$: \label{eg12}
\begin{itemize}
\item 
$\{
60\alpha\beta^2\gamma\delta \colon 
(60-y_1)\alpha\beta^2,\,
20\gamma^3,\,
y_1\alpha\delta^3 \opt
y_1\beta^2\delta^2,\,
(12-y_1)\delta^5
\},
\\
\alpha=\tfrac{4}{5}\pi,\,
\beta=\tfrac{3}{5}\pi,\,
\gamma=\tfrac{2}{3}\pi,\,
\delta=\tfrac{2}{5}\pi$. 
\item
$\{
84\alpha\beta^2\gamma\delta \colon 
72\alpha\beta^2,\,
20\gamma^3,\,
12\alpha\delta^3 \opt
24\beta\gamma\delta^2
\}, 
\\
\alpha=\tfrac{6}{7}\pi,\,
\beta=\tfrac{4}{7}\pi,\,
\gamma=\tfrac{2}{3}\pi,\,
\delta=\tfrac{8}{21}\pi$. 
\item
$\{
132\alpha\beta^2\gamma\delta \colon 
(120-y_1)\alpha\beta^2,\,
20\gamma^3,\,
(12+y_1)\alpha\delta^3 \opt
y_1\beta^3\delta,\,
(24-y_1)\beta\delta^4
\}, 
\\
\alpha=\tfrac{10}{11}\pi,\,
\beta=\tfrac{6}{11}\pi,\,
\gamma=\tfrac{2}{3}\pi,\,
\delta=\tfrac{4}{11}\pi$. 
\item
$\{
f\alpha\beta\gamma^2\delta \colon 
x_1\alpha\beta^2, \,
x_2\gamma^3, \,
x_3\alpha\delta^3 
\\
\opt 
y_1\alpha^{\frac{f+60}{72}}\beta\delta,\,
y_2\alpha^{\frac{f+36}{72}}\beta\gamma,\,
y_3\alpha^{\frac{f}{36}}\gamma^2,\,
y_4\alpha^{\frac{f+24}{36}}\delta^2,\,
y_5\alpha^{\frac{f+12}{36}}\gamma\delta,
\\
{}\quad y_6\alpha^{\frac{f+12}{24}}\beta,\,
y_7\alpha^{\frac{f}{18}}\gamma,\,
y_8\alpha^{\frac{f+6}{18}}\delta,\,
y_9\alpha^{\frac{f}{12}}
\}, 
\\
x_1 
=\tfrac{1}{2}f
-\tfrac{1}{2}y_1
-\tfrac{1}{2}y_2
-\tfrac{1}{2}y_6, 
\\
x_2 
=\tfrac{1}{3}f
-\tfrac{1}{3}y_2
-\tfrac{2}{3}y_3
-\tfrac{1}{3}y_5
-\tfrac{1}{3}y_7, 
\\
x_3 
=\tfrac{1}{3}f
-\tfrac{1}{3}y_1
-\tfrac{2}{3}y_4
-\tfrac{1}{3}y_5
-\tfrac{1}{3}y_8, 
\\ y_1+y_2+2y_3+2y_4+2y_5+3y_6+4y_7+4y_8+6y_9=12, 
\\
\alpha=\tfrac{24}{f}\pi,\,
\beta=(1-\tfrac{12}{f})\pi,\,
\gamma=\tfrac{2}{3}\pi,\,
\delta=(\tfrac{2}{3}-\tfrac{8}{f})\pi$.
\end{itemize}
\item $\{\alpha\beta^2,\gamma^3,\beta\delta^3\}$: \label{eg26} \label{eg33}
\begin{itemize}
\item 
$\{
f\alpha\beta^2\gamma\delta \colon 
x_1\alpha\beta^2, \,
x_2\gamma^3, \,
x_3\beta\delta^3 
\\
\opt 
y_1\alpha^{\frac{f-12}{72}}\gamma^2\delta,\,
y_2\alpha^{\frac{f+36}{72}}\beta\gamma,\,
y_3\alpha^{\frac{f-36}{72}}\gamma\delta^3,\,
y_4\alpha^{\frac{f+12}{72}}\beta\delta^2,\,
y_5\alpha^{\frac{f-60}{72}}\delta^5,
\\
{}\quad 
y_6\alpha^{\frac{f}{36}}\beta^2,\,
y_7\alpha^{\frac{f-12}{36}}\gamma\delta^2,\,
y_8\alpha^{\frac{f+12}{36}}\beta\delta,\,
y_9\alpha^{\frac{f-24}{36}}\delta^4, 
\\
{}\quad 
y_{10}\alpha^{\frac{f-4}{24}}\gamma\delta,\,
y_{11}\alpha^{\frac{f+12}{24}}\beta, \,
y_{12}\alpha^{\frac{f-12}{24}}\delta^3,
\\
{}\quad 
y_{13}\alpha^{\frac{f}{18}}\gamma,\,
y_{14}\alpha^{\frac{f-6}{18}}\delta^2,\,
y_{15}\alpha^{\frac{5f-12}{72}}\delta,\,
y_{16}\alpha^{\frac{f}{12}}
\},
\\
x_1 
=\tfrac{5}{6}f
+\tfrac{1}{6}y_1
-\tfrac{1}{2}y_2
+\tfrac{1}{6}y_3
-\tfrac{1}{6}y_4
+\tfrac{5}{6}y_5
+\tfrac{1}{3}y_7
-\tfrac{1}{3}y_8
+\tfrac{2}{3}y_9 
\\
{}\qquad 
+\tfrac{1}{6}y_{10}
-\tfrac{1}{2}y_{11}
+\tfrac{1}{2}y_{12}
+\tfrac{1}{3}y_{14}
+\tfrac{1}{6}y_{15},
\\
x_2 
=\tfrac{1}{3}f
-\tfrac{2}{3}y_1
-\tfrac{1}{3}y_2
-\tfrac{1}{3}y_3
-\tfrac{2}{3}y_6
-\tfrac{1}{3}y_7
-\tfrac{1}{3}y_{10}
-\tfrac{1}{3}y_{13},
\\
x_3 
=\tfrac{1}{3}f
-\tfrac{1}{3}y_1
-y_3
-\tfrac{2}{3}y_4
-\tfrac{5}{3}y_5
-\tfrac{2}{3}y_7
-\tfrac{1}{3}y_8
-\tfrac{4}{3}y_9 
\\
{}\qquad 
-\tfrac{1}{3}y_{10}
-y_{12}
-\tfrac{2}{3}y_{14}
-\tfrac{1}{3}y_{15}, 
\\
y_1+\cdots+y_5
+2(y_6+\cdots+y_9)
+3(y_{10}+y_{11}+y_{12}) 
\\
{}\qquad 
+4(y_{13}+y_{14})
+5y_{15}
+6y_{16}=12, 
\\
\alpha=\tfrac{24}{f}\pi,\,
\beta=(1-\tfrac{12}{f})\pi,\,
\gamma=\tfrac{2}{3}\pi,\,
\delta=(\tfrac{1}{3}+\tfrac{4}{f})\pi$.
\item
$\{
84\alpha\beta\gamma^2\delta \colon 
36\alpha\beta^2,\,
56\gamma^3,\,
12\beta\delta^3 \opt
24\alpha^2\delta^2
\}, 
\\
\alpha=\tfrac{4}{7}\pi,\,
\beta=\tfrac{9}{14}\pi,\,
\gamma=\tfrac{2}{3}\pi,\,
\delta=\tfrac{1}{7}\pi$.
\item
$\{
228\alpha\beta\gamma^2\delta \colon 
84\alpha\beta^2,\,
152\gamma^3,\,
60\beta\delta^3 \opt
48\alpha^3\delta
\}, 
\\
\alpha=\tfrac{10}{19}\pi,\,
\beta=\tfrac{14}{19}\pi,\,
\gamma=\tfrac{2}{3}\pi,\,
\delta=\tfrac{8}{19}\pi$.
\end{itemize}
\item $\{\alpha\beta^2,\gamma^3,\gamma\delta^3\}$:
\begin{itemize}
\item 
$\{
36\alpha\beta^2\gamma\delta \colon 
36\alpha\beta^2,\,
8\gamma^3,\,
12\gamma\delta^3 \}, \,
\alpha+2\beta=2\pi,\,
\gamma=\tfrac{2}{3}\pi,\,
\delta=\tfrac{4}{9}\pi$.
\item
$\{
72\alpha\beta\gamma^2\delta \colon 
36\alpha\beta^2,\,
44\gamma^3,\,
12\gamma\delta^3 \opt
18\alpha^2\delta^2
\}, 
\\
\alpha=\tfrac{5}{9}\pi,\,
\beta=\tfrac{13}{18}\pi,\,
\gamma=\tfrac{2}{3}\pi,\,
\delta=\tfrac{4}{9}\pi$.
\item
$\{
108\alpha\beta\gamma^2\delta \colon 
54\alpha\beta^2,\,
62\gamma^3,\,
30\gamma\delta^3 \opt
18\alpha^3\delta
\}, 
\\
\alpha=\tfrac{14}{27}\pi,\,
\beta=\tfrac{20}{27}\pi,\,
\gamma=\tfrac{2}{3}\pi,\,
\delta=\tfrac{4}{9}\pi$.
\item
$\{
144\alpha\beta\gamma^2\delta \colon 
72\alpha\beta^2,\,
80\gamma^3,\,
48\gamma\delta^3 \opt
18\alpha^4
\}, 
\\
\alpha=\tfrac{1}{2}\pi,\,
\beta=\tfrac{3}{4}\pi,\,
\gamma=\tfrac{2}{3}\pi,\,
\delta=\tfrac{4}{9}\pi$.
\end{itemize}
\item $\{\alpha\beta^2,\gamma^3,\delta^4\}$: \label{eg32}
\begin{itemize}
\item 
$\{
24\alpha\beta^2\gamma\delta \colon 
24\alpha\beta^2,\,
8\gamma^3,\,
6\delta^4 \}, \,
\alpha+2\beta=2\pi,\,
\gamma=\tfrac{2}{3}\pi,\,
\delta=\tfrac{1}{2}\pi$.
\item
$\{
72\alpha\beta\gamma^2\delta \colon 
36\alpha\beta^2,\,
44\gamma^3,\,
18\delta^4 \opt
12\alpha^3\gamma
\}, 
\\
\alpha=\tfrac{4}{9}\pi,\,
\beta=\tfrac{7}{9}\pi,\,
\gamma=\tfrac{2}{3}\pi,\,
\delta=\tfrac{1}{2}\pi$.
\item
$\{
96\alpha\beta\gamma^2\delta \colon 
48\alpha\beta^2,\,
56\gamma^3,\,
18\delta^4 \opt
24\alpha^2\gamma\delta
\}, 
\\
\alpha=\tfrac{5}{12}\pi,\,
\beta=\tfrac{19}{24}\pi,\,
\gamma=\tfrac{2}{3}\pi,\,
\delta=\tfrac{1}{2}\pi$.
\item
$\{
120\alpha\beta\gamma^2\delta \colon 
(48+y_2)\alpha\beta^2,\,
80\gamma^3,\,
30\delta^4 \opt
(24-2y_2)\alpha^3\beta,\,
y_2\alpha^5
\}, 
\\
\alpha=\tfrac{2}{5}\pi,\,
\beta=\tfrac{4}{5}\pi,\,
\gamma=\tfrac{2}{3}\pi,\,
\delta=\tfrac{1}{2}\pi$.
\item
$\{
192\alpha\beta\gamma^2\delta \colon 
96\alpha\beta^2,\,
128\gamma^3,\,
42\delta^4 \opt
24\alpha^4\delta
\}, 
\\
\alpha=\tfrac{3}{8}\pi,\,
\beta=\tfrac{13}{16}\pi,\,
\gamma=\tfrac{2}{3}\pi,\,
\delta=\tfrac{1}{2}\pi$.
\end{itemize}
\item $\{\alpha\beta^2,\gamma^3,\delta^5\}$:  \label{eg34}
\begin{itemize}
\item 
$\{
60\alpha\beta^2\gamma\delta \colon 
(60-y_1)\alpha\beta^2,\,
20\gamma^3,\,
(12-y_1)\delta^5 \opt
y_1\alpha\delta^3,\,
y_1\beta^2\delta^2
\},
\\
\alpha+2\beta=2\pi,\,
\gamma=\tfrac{2}{3}\pi,\,
\delta=\tfrac{2}{5}\pi$. 
\item
$\{
120\alpha\beta\gamma^2\delta \colon 
60\alpha\beta^2,\,
80\gamma^3,\,
12\delta^5 \opt
30\alpha^2\delta^2
\}, 
\\
\alpha=\tfrac{3}{5}\pi,\,
\beta=\tfrac{7}{10}\pi,\,
\gamma=\tfrac{2}{3}\pi,\,
\delta=\tfrac{2}{5}\pi$.
\end{itemize}
\end{itemize}

\subsection{Four Distinct Angles at Degree $3$ Vertices}
\label{classify4}

Suppose there are four distinct angles at degree $3$ vertices. Then the pentagon is $\alpha^2\beta\gamma\delta,\alpha\beta^2\gamma\delta,\alpha\beta\gamma^2\delta,\alpha\beta\gamma\delta^2,\alpha\beta\gamma\delta\epsilon$. The possible degree $3$ vertices are listed in the four angle part of Table \ref{deg3AVC}. 

For $\{\alpha\beta\gamma,\alpha\delta^2\}$, we get the following possible angle combinations:
\begin{enumerate}
\item If $\alpha\beta\gamma,\alpha\delta^2$ are all the degree $3$ vertices, then by Lemma \ref{more3}, $\alpha$ appears twice in the pentagon. Therefore the pentagon is $\alpha^2\beta\gamma\delta$, and we get $\{\alpha^2\beta\gamma\delta\colon \alpha\beta\gamma,\alpha\delta^2\}$.
\item If the tiling has four distinct angles, then the pentagon has angle combination $\alpha^2\beta\gamma\delta,\alpha\beta^2\gamma\delta,\alpha\beta\gamma^2\delta,\alpha\beta\gamma\delta^2$. We need to consider one of these four combined with one of $\{\alpha\beta\gamma,\alpha\delta^2,\beta^2\delta\},\{\alpha\beta\gamma,\alpha\delta^2,\beta^3\}$.
\item If the tiling has four distinct angles, then the pentagon has angle combination $\alpha\beta\gamma\delta\epsilon$. Moreover, by applying Lemma \ref{hdeg} to $\epsilon$, we know one of $\alpha\epsilon^3,\beta\epsilon^3,\gamma\epsilon^3,\delta\epsilon^3,\epsilon^4,\epsilon^5$ is a vertex. We need to consider one of $\{\alpha\beta\gamma\delta\epsilon\colon \alpha\beta\gamma,\alpha\delta^2,\beta^2\delta\},\{\alpha\beta\gamma\delta\epsilon\colon \alpha\beta\gamma,\alpha\delta^2,\beta^3\}$ combined with one of the six vertices with $\epsilon$.
\end{enumerate}
The first AVC $\{\alpha^2\beta\gamma\delta\colon \alpha\beta\gamma,\alpha\delta^2\}$ is Example 3 in Section \ref{routine3}. The order matrix is invertible in all the other combinations, and the routines in Sections \ref{routine1} and \ref{routine2} can be applied. 

The case $\{\alpha\beta\gamma,\alpha^2\delta\}$ is completely similar to $\{\alpha\beta\gamma,\alpha\delta^2\}$. One combination can be calculated similar to Example 3 in Section \ref{routine3}. The remaining combinations can be calculated by the routines in Sections \ref{routine1} and \ref{routine2}.

For $\{\alpha\beta\gamma,\delta^3\}$, we note that the angle sums for $\{\alpha\beta\gamma\delta^2\colon \alpha\beta\gamma,\delta^3\}$ imply $f=12$, and the case is dismissed. Then up to symmetry, we get the following possible combinations:
\begin{enumerate}
\item $\{\alpha^2\beta\gamma\delta\colon \alpha\beta\gamma,\delta^3\}$.
\item $\{\alpha\beta\gamma\delta\epsilon\colon \alpha\beta\gamma,\delta^3,\alpha\epsilon^3\}$.
\item $\{\alpha\beta\gamma\delta\epsilon\colon \alpha\beta\gamma,\delta^3\}$, together with one of $\delta\epsilon^3,\epsilon^4,\epsilon^5$.
\end{enumerate}
The first has positive lower bounds for $\alpha,\delta$, and the second has positive lower bounds for $\alpha,\delta,\epsilon$. Both can be calculated similar to Example 1 in Section \ref{routine3}. The third is Example 4 in Section \ref{routine3}.

For $\{\alpha\beta^2,\gamma\delta^2\}$, if the optional vertex $\alpha^2\delta$ also appears, then the routines in Sections \ref{routine1} and \ref{routine2} can be applied. If $\alpha\beta^2,\gamma\delta^2$ are the only degree $3$ vertices, then by Lemma \ref{more3} (and the remark after the lemma), we know $\beta,\delta$ together appear at least three times in the pentagon. Then up to the symmetry of exchanging $\beta,\delta$, we may assume the pentagon is $\alpha\beta^2\gamma\delta$. The angle sums of $\alpha\beta^2,\gamma\delta^2$ and the angle sum for $\alpha\beta^2\gamma\delta$ imply
\[
\alpha+2\beta=2\pi,\,
\gamma=\tfrac{8\pi}{f},\,
\delta=(1-\tfrac{4}{f})\pi.
\]
Then we may calculate similar to Example 3 in Section \ref{routine3}.

For $\{\alpha\beta^2,\alpha^2\gamma,\delta^3\}$, the routines in Sections \ref{routine1} and \ref{routine2} can be applied to all the cases.

We are able to find all the full AVCs for angle congruent pentagonal tilings with four distinct angles at degree $3$ vertices. The whole list is too long to be included in this paper.

\subsection{Five Distinct Angles at Degree $3$ Vertices}
\label{classify5}

Suppose there are five distinct angles at degree $3$ vertices. Then the pentagon is $\alpha\beta\gamma\delta\epsilon$. The possible degree $3$ vertices are listed in the five angle part of Table \ref{deg3AVC}.

The case $\{\alpha\beta\gamma\delta\epsilon\colon \alpha\beta\gamma,\alpha\delta^2,\alpha^2\epsilon\}$ is the simplest. By applying Lemma \ref{more3} to $\alpha$, we know one of the optional degree $3$ vertices $\beta\epsilon^2,\beta^2\delta,\beta^3$ must appear. Then the routines in Sections \ref{routine1} and \ref{routine2} can be applied. 

For $\{\alpha\beta\gamma\delta\epsilon\colon\alpha\beta\gamma,\alpha\delta\epsilon\}$, by applying Lemma \ref{more3} to $\alpha$, at least one of the optional vertices must appear. If $\beta\delta^2$ is a vertex, then
\[
\alpha=(1-\tfrac{4}{f})\pi,\,
\gamma=(1+\tfrac{4}{f})\pi-\beta,\,
\delta=\pi-\tfrac{1}{2}\beta,\,
\epsilon=\tfrac{4}{f}\pi+\tfrac{1}{2}\beta.
\]
The situation does not appear in Sections \ref{routine1}, \ref{routine2}, \ref{routine3}. The difficulty also appears for other combinations. For example, for $\{\alpha\beta\gamma\delta\epsilon\colon\alpha\beta\gamma,\alpha\delta^2,\beta\epsilon^2\}$, we have
\[
\beta=(2-\tfrac{8}{f})\pi-\alpha,\,
\gamma=\tfrac{8}{f}\pi,\,
\delta=\pi-\tfrac{1}{2}\alpha,\,
\epsilon=\tfrac{4}{f}\pi+\tfrac{1}{2}\alpha.
\]
For $\{\alpha\beta\gamma\delta\epsilon\colon\alpha\beta\gamma,\alpha\delta^2,\delta\epsilon^2\}$, we have
\[
\alpha=(2-\tfrac{16}{f})\pi,\,
\beta+\gamma=\tfrac{16}{f}\pi,\,
\delta=\tfrac{8}{f}\pi,\,
\epsilon=(1-\tfrac{4}{f})\pi.
\]
Although some of the techniques we developed so far may still be used to compute some of the full AVCs, we choose not to pursue further. 

We remark that the examples above allow free continuous choice of at most one angle. Among the AVCs with five distinct angles at degree $3$ vertices, the only possible one allowing free continuous choices of two angles is $\{\alpha\beta\gamma\delta\epsilon\colon\alpha\beta\gamma,\delta\epsilon^2\}$. The angle sums of $\alpha\beta\gamma,\delta\epsilon^2$ and the angle sum for $\alpha\beta\gamma\delta\epsilon$ imply
\[
\alpha+\beta+\gamma=2\pi,\,
\delta=\tfrac{8}{f}\pi,\,
\epsilon=(1-\tfrac{4}{f})\pi.
\]
The appearance of one more vertex will cut the number of free choices. For example, according to Table \ref{deg3AVC}, we should consider the case $\alpha^3$ is also a vertex. In this case, we get 
\[
\alpha=\tfrac{2}{3}\pi,\,
\beta+\gamma=\tfrac{4}{3}\pi,\,
\delta=\tfrac{8}{f}\pi,\,
\epsilon=(1-\tfrac{4}{f})\pi,
\]
which allows free continuous choices of only one angle.

Now assume $\alpha^3$ is not a vertex, which means $\alpha\beta\gamma,\delta\epsilon^2$ are the only degree $3$ vertices. Up to the symmetry of exchanging $\alpha,\beta,\gamma$, we may assume $\alpha<\beta<\gamma$. By $\alpha+\beta+\gamma=2\pi$, this implies $\beta<\pi$.

By $f>12$, there must be a high degree vertex. By applying Lemma \ref{more3} to $\alpha,\beta,\gamma$, and the fact that the pentagon has only five angles, we know one of $\alpha,\beta,\gamma$ appears at high degree vertex. Then by applying Lemma \ref{counting} to any two from $\alpha,\beta,\gamma$, we know each of $\alpha,\beta,\gamma$ appears at high degree vertices. Moreover, there is a high degree vertex $V=\gamma\cdots$, such that the number of $\alpha$ is no more than the number of $\gamma$. By $\alpha+\beta+\gamma=2\pi$ and $\alpha<\beta<\gamma$, if $V$ has $\alpha$, then $V=\alpha\gamma\cdots$, with no $\alpha,\beta,\gamma$ in $\cdots$. By $\delta+\epsilon>\pi>\beta$, and $\delta<\epsilon$, and $\alpha+\beta+\gamma=2\pi$, we know $V=\alpha\gamma\delta^g$. If $V$ has no $\alpha$, then by an argument similar to Example 4 in Section \ref{routine3}, we get $V=\beta\gamma\delta^g,\gamma\delta^g,\gamma\delta^g\epsilon,\gamma^2\delta^g$. Then the angle sum of $V$ further implies
\begin{align*}
\alpha\gamma\delta^g &\colon
\alpha+\gamma=(2-\tfrac{8g}{f})\pi,\,
\beta=\tfrac{8g}{f}\pi,\,
\delta=\tfrac{8}{f}\pi,\,
\epsilon=(1-\tfrac{4}{f})\pi; \\
\beta\gamma\delta^g
&\colon
\alpha=\tfrac{8g}{f}\pi,\,
\beta+\gamma=(2-\tfrac{8g}{f})\pi; \\
\gamma\delta^g
&\colon
\alpha+\beta=\tfrac{8g}{f}\pi,\,
\gamma=(2-\tfrac{8g}{f})\pi; \\
\gamma\delta^g\epsilon
&\colon
\alpha+\beta=(1+\tfrac{8g-4}{f})\pi,\,
\gamma=(1-\tfrac{8g-4}{f})\pi; \\
\gamma^2\delta^g
&\colon
\alpha+\beta=(1+\tfrac{4g}{f})\pi,\,
\gamma=(1-\tfrac{4g}{f})\pi.
\end{align*}
Each case allows free continuous choices of only one angle.

We conclude that, if there are five distinct angles at degree $3$ vertices, then the AVC does not allow free continuous choice of two angles.

\section{Tilings with Two Free Angles}
\label{realize}

By free, we mean continuous choice. By two free angles, we mean two angles can be any point in a two dimensional domain.

In the complete classification of edge-to-edge tilings of the sphere by (angle as well as edge) congruent pentagons \cite{ay,awy,cly5,gsy,wy1,wy2}, we find the tilings are three pentagonal subdivisions, two double pentagonal subdivisions, two earth map tilings, and the modifications of these tilings. The pentagonal subdivisions allow two free parameters, which can be two angles. In fact, the two dimensional moduli of pentagonal subdivisions are described in detail in \cite{wy3}. The double pentagonal subdivisions are rigid, which means the pentagons have specific angle values and edge lengths. For each fixed $f=0$ mod $4$, one earth map tiling is rigid, and one earth map tiling allows one free parameter. The modifications often require extra equalities among angles, and therefore cuts the number of free parameters. In the end, the pentagonal subdivisions are the only tilings allowing two free parameters.

How many free angles can angle congruent tilings have? Gao, Shi, Yan \cite[Proposition 17]{gsy} listed all the full AVCs for angle congruent dodecahedron tilings. The dodecahedron is the pentagonal subdivision of the tetrahedron, and is also  the pentagonal tiling with smallest number $f=12$ of tiles. In the list, the only AVC allowing two free angles is $\{12\alpha^2\beta\gamma\delta\colon 8\alpha^3,12\beta\gamma\delta\}$. To make the notations consistent with the other AVCs in this paper, we exchange $\alpha,\delta$, and the AVC becomes $\{12\alpha\beta\gamma\delta^2\colon 12\alpha\beta\gamma,8\delta^3\}$. In \cite[Proposition 19]{gsy}, Gao, Shi, Yan further showed angle congruent tilings for this AVC are given by the first of Figure \ref{12angletiling}, in which $\beta,\gamma$ can be independently exchanged along six dashed edges. 

\begin{figure}[htp]
\centering
\begin{tikzpicture}[>=latex]

\foreach \b in {0,1,2,3,4}
{

\coordinate  (A\b) at (90+72*\b:0.7);
\coordinate  (B\b) at (90+72*\b:1.3);
\coordinate  (C\b) at (126+72*\b:1.7);
\coordinate  (D\b) at (54+72*\b:2.5);

\coordinate  (P\b) at (90+72*\b:0.45);

\coordinate  (Q\b) at (102+72*\b:0.83);
\coordinate  (R\b) at (100+72*\b:1.13);
\coordinate  (S\b) at (126+72*\b:1.43);
\coordinate  (T\b) at (152+72*\b:1.13);
\coordinate  (U\b) at (150+72*\b:0.83);

\coordinate  (X\b) at (90+72*\b:1.5);
\coordinate  (Y\b) at (60+72*\b:1.8);
\coordinate  (Z\b) at (120+72*\b:1.8);
\coordinate  (V\b) at (59+72*\b:2.15);
\coordinate  (W\b) at (121+72*\b:2.15);

\coordinate  (L\b) at (54+72*\b:2.7);
}

\draw
	(A3) -- (A4) -- (A0) -- (A1) -- (A2)
	(A1) -- (B1) -- (C0) -- (D1) -- (D2)
	(A2) -- (B2) -- (C1) -- (D2) -- (D3)
	(A3) -- (B3) -- (C3) -- (D4) -- (D3)
	(A4) -- (B4) -- (C4) -- (D0) -- (D4)
	(C4) -- (B0) -- (C0)
	(B2) -- (C2) -- (B3);
	
\draw[dashed]
	(A0) -- (B0)
	(A2) -- (A3)
	(B1) -- (C1)
	(B4) -- (C3)
	(D0) -- (D1)
	(C2) -- (D3);

\node at (P0) {\small $\alpha$};
\node at (P1) {\small $\delta$};
\node at (P2) {\small $\beta$};
\node at (P3) {\small $\gamma$};
\node at (P4) {\small $\delta$};

\node at (Q0) {\small $\beta$};
\node at (R0) {\small $\gamma$};
\node at (S0) {\small $\delta$};
\node at (T0) {\small $\alpha$};
\node at (U0) {\small $\delta$};

\node at (Q1) {\small $\delta$};
\node at (R1) {\small $\beta$};
\node at (S1) {\small $\gamma$};
\node at (T1) {\small $\delta$};
\node at (U1) {\small $\alpha$};

\node at (Q2) {\small $\gamma$};
\node at (R2) {\small $\delta$};
\node at (S2) {\small $\alpha$};
\node at (T2) {\small $\delta$};
\node at (U2) {\small $\beta$};

\node at (Q3) {\small $\alpha$};
\node at (R3) {\small $\delta$};
\node at (S3) {\small $\beta$};
\node at (T3) {\small $\gamma$};
\node at (U3) {\small $\delta$};

\node at (Q4) {\small $\delta$};
\node at (R4) {\small $\alpha$};
\node at (S4) {\small $\delta$};
\node at (T4) {\small $\beta$};
\node at (U4) {\small $\gamma$};

\node at (X0) {\small $\alpha$};
\node at (Y0) {\small $\delta$};
\node at (Z0) {\small $\delta$};
\node at (V0) {\small $\beta$};
\node at (W0) {\small $\gamma$};

\node at (X1) {\small $\gamma$};
\node at (Y1) {\small $\delta$};
\node at (Z1) {\small $\beta$};
\node at (V1) {\small $\alpha$};
\node at (W1) {\small $\delta$};

\node at (X2) {\small $\delta$};
\node at (Y2) {\small $\alpha$};
\node at (Z2) {\small $\gamma$};
\node at (V2) {\small $\delta$};
\node at (W2) {\small $\beta$};

\node at (X3) {\small $\delta$};
\node at (Y3) {\small $\beta$};
\node at (Z3) {\small $\alpha$};
\node at (V3) {\small $\gamma$};
\node at (W3) {\small $\delta$};

\node at (X4) {\small $\beta$};
\node at (Y4) {\small $\gamma$};
\node at (Z4) {\small $\delta$};
\node at (V4) {\small $\delta$};
\node at (W4) {\small $\alpha$};

\node at (L0) {\small $\gamma$};
\node at (L1) {\small $\beta$};
\node at (L2) {\small $\delta$};
\node at (L3) {\small $\alpha$};
\node at (L4) {\small $\delta$};


\foreach \a in {0,...,3}
{
\begin{scope}[xshift=7cm, rotate=90*\a]

\draw
	(0,0) -- (0.8,0) -- (1.2,0.6) -- (0.6,1.2) -- (0,0.8)
	(1.2,0.6) -- (1.8,0.6)
	(1.2,-0.6) -- (2.6,-0.6)
	(1.8,-2.6) -- (1.8,1.8)
	(2.6,-2.6) -- (2.6,2.6) -- (3,3);

\node at (0.6,0.95) {\small $\beta$};
\node at (0.95,0.6) {\small $\delta$}; 
\node at (0.2,0.7) {\small $\gamma$};
\node at (0.7,0.2) {\small $\alpha$};
\node at (0.2,0.2) {\small $\epsilon$};

\node at (0.4,1.3) {\small $\gamma$}; 
\node at (0,1.1) {\small $\beta$};  
\node at (0.4,1.6) {\small $\epsilon$};
\node at (-0.4,1.3) {\small $\delta$};
\node at (-0.4,1.6) {\small $\alpha$};

\node at (0.8,1.6) {\small $\epsilon$}; 
\node at (1.6,1.6) {\small $\gamma$};
\node at (0.8,1.3) {\small $\alpha$};
\node at (1.3,0.8) {\small $\delta$};
\node at (1.6,0.8) {\small $\beta$};

\node at (2.4,-0.4) {\small $\alpha$};
\node at (2,1.6) {\small $\beta$};
\node at (2,0.6) {\small $\gamma$};
\node at (2.4,1.6) {\small $\delta$};
\node at (2,-0.4) {\small $\epsilon$}; 

\node at (2,-1.8) {\small $\alpha$}; 
\node at (2.4,-2.4) {\small $\beta$}; 
\node at (2.4,-0.8) {\small $\gamma$}; 
\node at (2,-2.4) {\small $\delta$}; 
\node at (2,-0.8) {\small $\epsilon$}; 

\node at (2.8,2.5) {\small $\alpha$};
\node at (2.8,-0.6) {\small $\beta$};
\node at (2.8,-2.5) {\small $\gamma$};
\node at (2.8,1.8) {\small $\delta$};
\node at (3.1,0) {\small $\epsilon$};

\end{scope}
}

\end{tikzpicture}
\caption{Angle congruent tilings for $f=12,24$.}
\label{12angletiling}
\end{figure}

For $f>12$, by the discussion in Section \ref{classify}, the only AVCs allowing two free angles are \eqref{60a}, \eqref{36a}, \eqref{24a} from Example 4 of Section \ref{routine3}, with respective $f=60,36,24$. 

In fact, for the purpose of finding tilings allowing two free angles, we do not need to calculate the full AVC. What we need is to compare the number of angles with the number of equations satisfied by the angles. For up to three distinct angles at degree $3$ vertices, Table \ref{3completeAVC} gives all the possibilities. For example, for the pentagon with edge combination $\alpha^2\beta\gamma\delta$, we know either $\alpha\beta\gamma,\alpha^3$ are vertices, or $\alpha\beta^2,\alpha^2\gamma$ are vertices. Moreover, by Lemma \ref{hdeg}, we also know one of $\alpha\delta^3,\beta\delta^3,\gamma\delta^3,\delta^4,\delta^5$ is a vertex. Therefore we have four angles satisfying $1+2+1=4$ equations. In principle (upon checking the dependency of equations), this means no free angle. 

For four distinct angles at degree $3$ vertices, by Table \ref{deg3AVC}, either the pentagon has four angles satisfying at least $1+2=3$ equations, or the pentagon has five angles satisfying at least $1+2+1=4$ equations. In principle, this means at most one free angle. Upon checking the dependency of equations, we find \eqref{60a}, \eqref{36a}, \eqref{24a} are the only cases with two free angles.

For five distinct angles at degree $3$ vertices, we go through the same discussion using Table \ref{deg3AVC}. The only case that requires further discussion (i.e., possibly 2 free angle) is $\{\alpha\beta\gamma\delta\epsilon\colon\alpha\beta\gamma,\delta\epsilon^2\}$. The case is discussed in detail Section \ref{classify5}, and we find one more equation from high order vertex that cuts down the number of free angle to no more than 1.

We already know that the AVCs \eqref{60a} and \eqref{24a} can be realized by pentagonal subdivisions. It turns out that the AVC \eqref{36a} cannot be realized.

\begin{theorem}\label{anglerealize}
Edge-to-edge tilings of the sphere by angle congruent pentagons that allow two free angles are the pentagonal subdivision tilings, and the further modification by independent exchange of $\beta,\gamma$ along six dashed edges in the first of Figure \ref{12angletiling}.
\end{theorem}

The pentagonal subdivision of the octahedron (and the cube) has $f=24$ tiles, and the angle congruent tiling is the second of Figure \ref{12angletiling}. The pentagonal subdivision of the icosahedron (and the dodecahedron) has $f=60$ tiles, and the angle congruent tiling is similar.

\begin{proof}
We need to find angle congruent tilings for the AVCs \eqref{60a}, \eqref{36a}, \eqref{24a}. 

First, we argue that $\delta,\epsilon$ are not adjacent in the pentagon. In the three AVCs, we have $\delta\cdots=\delta^3,\delta\epsilon^3$ and $\epsilon\cdots=\delta\epsilon^3,\epsilon^4,\epsilon^5$. In particular, we have consecutive $\epsilon\epsilon\epsilon$ at the vertex $\epsilon\cdots$. In the first of Figure \ref{prooffig}, we let the tile $T_1$ contain the middle $\epsilon$. Then we may assume $\delta_1$ is located as indicated. By $\delta_1\cdots=\delta^3,\delta\epsilon^3$, the angle in $T_2$ next to $\delta_1$ is $\delta,\epsilon$. Since we already have $\epsilon_2$, we know the angle is $\delta_2$, as indicated in the picture. Then $\delta_1\delta_2\cdots=\delta^3$. This gives $\delta_3$ in tile $T_3$. Then $\epsilon_3$ is at one of the two places indicated in the picture. Then by $\epsilon\cdots=\delta\epsilon^3,\epsilon^4,\epsilon^5$, we know the angle marked $?$ is $\delta$ or $\epsilon$. This implies $T_1$ or $T_2$ has two $\delta$ or two $\epsilon$, a contradiction.

By $\delta,\epsilon$ not adjacent, and by the symmetry of exchanging $\alpha,\beta,\gamma$ in all three AVCs, we may assume the angles in the pentagon are arranged in the order $\alpha,\delta,\beta,\gamma,\epsilon$, like the tiles in the second and third of Figure \ref{prooffig}.

\begin{figure}[htp]
\centering
\begin{tikzpicture}[>=latex,scale=1]

\begin{scope}[xshift=-4cm]

\draw
	(-0.4,0) -- (0.8,0) -- (1.2,0.6) -- (0.6,1.2) -- (0,0.8) -- (0,-0.8) -- (0.6,-1.2) -- (1.2,-0.6) -- (0.8,0)
	(1.2,0.6) -- (1.8,0.6) -- (1.8,-0.6) -- (1.2,-0.6);

\node at (-0.2,-0.2) {\small $\epsilon$};

\node at (0.95,0.55) {\small ?};
\node at (0.95,0.55) {\small ?};

\node at (0.2,0.2) {\small $\epsilon$};
\node at (0.7,0.2) {\small $\delta$};

\node at (0.2,-0.2) {\small $\epsilon$};
\node at (0.7,-0.2) {\small $\delta$};

\node at (1,0) {\small $\delta$};
\node at (1.3,0.4) {\small $\epsilon$};
\node at (1.3,-0.4) {\small $\epsilon$};

\node[inner sep=0.5,draw,shape=circle] at (0.5,-0.5) {\small 1};
\node[inner sep=0.5,draw,shape=circle] at (0.5,0.5) {\small 2};
\node[inner sep=0.5,draw,shape=circle] at (1.45,0) {\small 3};

\end{scope}


\foreach \x in {0,1,2,3}
{
\begin{scope}[rotate=90*\x]

\draw
	(0,0) -- (0.8,0) -- (1.2,0.6) -- (0.6,1.2) -- (0,0.8);

\node at (0.2,0.2) {\small $\epsilon$};
\node at (0.7,0.2) {\small $\gamma$};
\node at (0.95,0.57) {\small $\beta$};
\node at (0.6,0.95) {\small $\delta$};
\node at (0.2,0.7) {\small $\alpha$};

\end{scope}
}

\draw
	(0.6,1.2) -- (0.6,1.8) -- (1.8,1.8) -- (1.8,-0.6) -- (1.2,-0.6)
	(1.2,0.6) -- (2.3,0.6);

\node at (1.05,0.05) {\small $\beta$};
\node at (1.3,-0.4) {\small $\delta$};
\node at (1.3,0.4) {\small $\gamma$};
\node at (1.65,-0.4) {\small $\alpha$};
\node at (1.65,0.4) {\small $\epsilon$};
\node at (1.25,0.8) {\small $\alpha$};
\node at (1.65,0.8) {\small $\epsilon$};
\node at (1.65,1.6) {\small $\gamma$};
\node at (0.8,1.55) {\small $\beta$};
\node at (0.8,1.25) {\small $\delta$};
\node at (1.95,0.4) {\small $\epsilon$};
\node at (1.95,0.8) {\small $\epsilon$};

\node at (1.25,-0.8) {\small $\delta$};
\node at (0.4,1.25) {\small $\delta$};

\node[inner sep=0.5,draw,shape=circle] at (0.5,-0.5) {\small 1};
\node[inner sep=0.5,draw,shape=circle] at (0.5,0.5) {\small 2};
\node[inner sep=0.5,draw,shape=circle] at (-0.5,0.5) {\small 3};
\node[inner sep=0.5,draw,shape=circle] at (-0.5,-0.5) {\small 4};
\node[inner sep=0.5,draw,shape=circle] at (1.45,0) {\small 5};
\node[inner sep=0.5,draw,shape=circle] at (1.3,1.3) {\small 6};


\begin{scope}[xshift=5cm]

\foreach \x in {0,1,2,3}
\draw[rotate=90*\x]
	(0,0) -- (0.8,0) -- (1.2,0.6) -- (0.6,1.2) -- (0,0.8);

\draw
	(-1.2,-0.6) -- (-1.8,-0.6) -- (-1.8,-2.5) -- (2.5,-2.5) -- (2.5,0.6)
	(0.6,1.2) -- (0.6,1.8) -- (3.2,1.8) -- (3.2,-1.8) -- (-1.8,-1.8)
	(-0.6,-1.2) -- (-0.6,-1.8)
	(0.6,-1.2) -- (0.6,-2.5)
	(1.8,1.8) -- (1.8,-1.8)
	(1.2,-0.6) -- (1.8,-0.6)
	(1.2,0.6) -- (3.2,0.6);
	
\node at (0.2,0.2) {\small $\epsilon$};
\node at (0.7,0.2) {\small $\gamma$};
\node at (0.95,0.55) {\small $\beta$};
\node at (0.55,0.95) {\small $\delta$};
\node at (0.2,0.7) {\small $\alpha$};
\node at (0.2,-0.2) {\small $\epsilon$};
\node at (0.7,-0.2) {\small $\alpha$};
\node at (0.95,-0.6) {\small $\delta$};
\node at (0.6,-0.95) {\small $\beta$};
\node at (0.2,-0.7) {\small $\gamma$};
\node at (-0.2,0.2) {\small $\delta$};
\node at (-0.7,0.2) {\small $\alpha$};
\node at (-0.95,0.6) {\small $\epsilon$};
\node at (-0.6,0.95) {\small $\gamma$};
\node at (-0.2,0.65) {\small $\beta$};
\node at (-0.2,-0.2) {\small $\epsilon$};
\node at (-0.7,-0.2) {\small $\gamma$};
\node at (-0.95,-0.55) {\small $\beta$};
\node at (-0.55,-0.95) {\small $\delta$};
\node at (-0.2,-0.7) {\small $\alpha$};
\node at (1.05,0) {\small $\beta$};
\node at (1.3,-0.4) {\small $\delta$};
\node at (1.3,0.4) {\small $\gamma$};
\node at (1.6,-0.4) {\small $\alpha$};
\node at (1.6,0.4) {\small $\epsilon$};
\node at (1.3,-0.8) {\small $\delta$};
\node at (1.6,-0.85) {\small $\beta$};
\node at (1.6,-1.6) {\small $\gamma$};
\node at (0.8,-1.6) {\small $\epsilon$};
\node at (0.8,-1.25) {\small $\alpha$};
\node at (0,-1.05) {\small $\beta$};
\node at (0.4,-1.25) {\small $\gamma$};
\node at (-0.4,-1.25) {\small $\delta$};
\node at (0.4,-1.6) {\small $\epsilon$};
\node at (-0.4,-1.6) {\small $\alpha$};
\node at (-0.8,-1.6) {\small $\beta$};
\node at (-0.8,-1.25) {\small $\delta$};
\node at (-1.3,-0.8) {\small $\alpha$};
\node at (-1.6,-0.8) {\small $\epsilon$};
\node at (-1.6,-1.6) {\small $\gamma$};
\node at (-0.6,-2) {\small $\gamma$};
\node at (0.4,-2) {\small $\epsilon$};
\node at (-1.6,-2) {\small $\beta$};
\node at (-1.6,-2.3) {\small $\delta$};
\node at (0.4,-2.3) {\small $\alpha$};
\node at (0.8,-2) {\small $\delta$};
\node at (0.8,-2.3) {\small $\beta$};
\node at (1.8,-2) {\small $\alpha$};
\node at (2.3,-2.3) {\small $\gamma$};
\node at (2.3,-2) {\small $\epsilon$};
\node at (2,0.4) {\small $\epsilon$};
\node at (2,-1.6) {\small $\beta$};
\node at (2.3,-1.55) {\small $\delta$};
\node at (2,-0.6) {\small $\gamma$};
\node at (2.3,0.4) {\small $\alpha$};
\node at (3,-1.6) {\small $\alpha$};
\node at (3,0.4) {\small $\beta$};
\node at (2.7,0.4) {\small $\gamma$};
\node at (3,-0.6) {\small $\delta$};
\node at (2.7,-1.6) {\small $\epsilon$};
\node at (3,0.8) {\small $\gamma$};
\node at (2,0.8) {\small $\delta$};
\node at (2.5,0.8) {\small $\beta$};
\node at (2,1.6) {\small $\alpha$};
\node at (3,1.6) {\small $\epsilon$};
\node at (1.3,0.8) {\small $\alpha$};
\node at (1.6,0.8) {\small $\epsilon$};
\node at (1.6,1.6) {\small $\gamma$};
\node at (0.8,1.6) {\small $\beta$};
\node at (0.8,1.3) {\small $\delta$};
\node at (0.4,1.3) {\small $\delta$};
\node at (0,1.05) {\small $\gamma$};
\node at (2.7,-2) {\small $\epsilon$};

\node[inner sep=0.5,draw,shape=circle] at (0.5,0.5) {\small $2$};
\node[inner sep=0.5,draw,shape=circle] at (0.5,-0.5) {\small $1$};
\node[inner sep=0.5,draw,shape=circle] at (-0.5,-0.5) {\small $4$};
\node[inner sep=0.5,draw,shape=circle] at (-0.5,0.5) {\small $3$};
\node[inner sep=0.5,draw,shape=circle] at (1.45,0) {\small $5$};
\node[inner sep=0.5,draw,shape=circle] at (0,-1.45) {\small $6$};
\node[inner sep=0.5,draw,shape=circle] at (1.3,-1.3) {\small $7$};
\node[inner sep=0.5,draw,shape=circle] at (-1.3,-1.3) {\small $8$};
\node[inner sep=0.5,draw,shape=circle] at (0,-2.15) {\small $9$};
\node[inner sep=0,draw,shape=circle] at (1.3,-2.15) {\footnotesize $10$};
\node[inner sep=0,draw,shape=circle] at (2.15,0) {\footnotesize $11$};
\node[inner sep=0,draw,shape=circle] at (2.85,0) {\footnotesize $12$};
\node[inner sep=0,draw,shape=circle] at (1.25,1.25) {\footnotesize $14$};
\node[inner sep=0,draw,shape=circle] at (2.5,1.25) {\footnotesize $13$};

\end{scope}

\end{tikzpicture}
\caption{Tilings for $\{\alpha\beta\gamma,\delta^3,\epsilon^4\}$ and $\{\alpha\beta\gamma,\delta^3,\delta\epsilon^3\}$.}
\label{prooffig}
\end{figure}

Now we consider the AVC $\{
24\alpha\beta\gamma\delta\epsilon \colon 
24\alpha\beta\gamma,
8\delta^3,
6\epsilon^4 \}$ in \eqref{24a}. We start with a vertex $\epsilon^4$ surrounded by tiles $T_1,T_2,T_3,T_4$ in the second of Figure \ref{prooffig}. We may assume $T_1$ is arranged as indicated. By $\alpha_1\cdots=\alpha\beta\gamma$ and $\beta,\epsilon$ not adjacent in $T_2$, we get $\beta_5,\gamma_2$. Then $\gamma_2,\epsilon_2$ determine (the angle arrangement of) $T_2$. Then we may start with $\alpha_2\cdots$ in place of $\alpha_1\cdots$ and repeat the argument. This determines $T_3$. One more repeat of the argument with $\alpha_3\cdots$ further determines $T_4$. On the other hand, by $\beta_2\cdots=\alpha\beta\gamma$ and $\alpha,\beta$ not adjacent in $T_3$, we get $\alpha_6,\gamma_5$. Then $\beta_5,\gamma_5$  determine $T_5$. Then $\epsilon_5\cdots=\epsilon^4$ and $\alpha_6$ determine $T_6$. We also note that $\delta_1\cdots=\delta_2\cdots=\delta^3$. 

The argument that started with the initial $\epsilon^4$ can be applied to $\epsilon_5\epsilon_6\cdots=\epsilon^4$, and determines more tiles. More repetitions of the argument gives the pentagonal subdivision of the octahedron in the second of Figure \ref{12angletiling}. 

For the AVC $\{
60\alpha\beta\gamma\delta\epsilon \colon 
60\alpha\beta\gamma,
20\delta^3,
12\epsilon^5 \}$ in \eqref{60a}, we may carry out the same argument with $\epsilon^5$ in place of $\epsilon^4$. We get the pentagonal subdivision of the icosahedron.

It remains to consider the AVC $\{
36\alpha\beta\gamma\delta\epsilon \colon 
36\alpha\beta\gamma,\,
8\delta^3,\,
12\delta\epsilon^3 \}$ in \eqref{36a}. We start with $T_1,T_2,T_3,T_4$ around $\delta\epsilon^3$ in the third of Figure \ref{prooffig}. We may assume the angles of $T_1$ are arranged as indicated. Then we determine $T_2,T_5$ by the same reason as the second picture. By $\alpha_2\cdots=\alpha\beta\gamma$ and $\gamma,\delta$ not adjacent, we get $\beta_3$. Then $\beta_3,\delta_3$ determine $T_3$. By $\alpha_3\cdots=\alpha\beta\gamma$ and $\beta,\epsilon$ not adjacent, we get $\gamma_4$. Then $\gamma_4,\epsilon_4$ determine $T_4$. Then $\alpha_4\gamma_1\cdots=\alpha\beta\gamma$ gives $\beta_6$. Then $\beta_1\cdots=\alpha\beta\gamma$ and $\alpha,\beta$ not adjacent determine $\gamma_6,\alpha_7$. Then $\beta_6,\gamma_6$ determine $T_6$, and $\alpha_7$ and $\delta_1\delta_5\cdots=\delta^3$ determine $T_7$. Moreover, $\delta_4\delta_6\cdots=\delta_3$, and $\beta_4\cdots=\alpha\beta\gamma$, and $\gamma,\delta$ not adjacent determine $T_8$. Then $\alpha_6\beta_8\cdots=\alpha\beta\gamma$, and $\epsilon_6\epsilon_7\cdots=\delta\epsilon^3$, and $\gamma,\delta$ not adjacent determine $T_9$ and give $\delta_{10}$. 

By $\alpha_5\beta_7\cdots=\alpha\beta\gamma$ and no $\gamma_7\epsilon\cdots$, we determine $T_{11}$. Then $\delta_{10}$ and $\beta_{11}\gamma_7\cdots=\alpha\beta\gamma$ determine $T_{10}$. Then $\delta_{11}\epsilon_{10}\cdots=\delta\epsilon^3$ gives $\epsilon_{12}$. Then $\alpha_{11}\cdots=\alpha\beta\gamma$ and $\beta,\epsilon$ not adjacent give $\beta_{13},\gamma_{12}$. Then $\alpha_{11}\gamma_{12}\cdots=\alpha\beta\gamma$, and $\beta_2\gamma_5\cdots=\alpha\beta\gamma$, and $\epsilon_5\epsilon_{11}\cdots=\delta\epsilon_3$, and $\beta,\epsilon$ not adjacent determine $T_{13},T_{14}$. Then $\alpha_2\beta_3\cdots=\alpha\beta\gamma$ and $\delta_2\delta_{13}\cdots=\delta^3$ imply $\gamma,\delta$ adjacent, a contradiction. 
\end{proof}

We further examine how the tilings in Theorem \ref{anglerealize} can be geometrically realized, in the sense all tiles are congruent, i.e., transformed to each other by rotations and flips of the sphere. For the pentagonal subdivision of the octahedron in the second of Figure \ref{12angletiling}, this means $\alpha\delta$-edge and $\beta\delta$-edge are identified, and $\alpha\epsilon$-edge and $\gamma\epsilon$-edge are identified. If these edges are straight, then we get the tiling on the left of Figure \ref{geomtiling}. The pentagon is given by the upper right picture, with $\alpha\delta$- and $\beta\delta$-edges indicated by the normal line, and $\alpha\epsilon$- and $\gamma\epsilon$-edges indicated by the thick line, and the $\beta\gamma$-edge indicated by the dashed line. This is the pentagonal subdivision of the octahedron in \cite{wy1}, and the 2-dimensional moduli is described in detail in \cite{wy3}.

\begin{figure}[htp]
\centering
\begin{tikzpicture}[>=latex]


\foreach \a in {0,...,3}
{
\begin{scope}[rotate=90*\a]

\draw
	(0,0.8) -- (0,0) -- (0.8,0) -- (1.2,0.6) -- (0.6,1.2) 
	(1.2,0.6) -- (1.8,0.6)
	(1.2,-0.6) -- (2.6,-0.6)
	(1.8,-2.6) -- (1.8,0.6)
	(2.6,-0.6) -- (2.6,2.6) -- (3,3)
	(0.4,0) -- ++(-45:0.2)
	(1,0.3) -- ++(-80:0.2)
	(0.9,0.9) -- ++(180:0.2)
	(0.933,-0.2) -- ++(210:0.2)
	(1.067,-0.4) -- ++(30:0.2)
	(1.5,0.6) -- ++(45:0.2)
	(1.8,0) -- ++(45:0.2)
	(1.5,-0.6) -- ++(135:0.2)
	(1.2,1.8) -- ++(-45:0.2)
	(1.8,1) -- ++(0:0.2)
	(1.8,1.4) -- ++(180:0.2)
	(2.2,-0.6) -- ++(-45:0.2)
	(2.6,0.6) -- ++(-45:0.2)
	(2.2,1.8) -- ++(225:0.2)
	(2.6,2.2) -- ++(135:0.2)
	(2.6,-1.3) -- ++(180:0.2)
	(2.6,-1.9) -- ++(0:0.2)
	(2.8,2.8) -- ++(180:0.2);

\draw[line width=1.2]
	(0,0) -- (0.8,0)
	(1.8,1.8) -- (-0.6,1.8) 
	(0.6,1.2) -- (0.6,2.6)
	(2.6,2.6) -- (3,3);

\draw[dashed]
	(0.6,1.2) -- (0,0.8)
	(1.8,0.6) -- (1.8,1.8)
	(0.6,2.6) -- (2.6,2.6);
	
\end{scope}
}

\begin{scope}[xshift=5.5cm]

\begin{scope}[yshift=1.5cm]

\draw
	(90:1) -- (162:1) -- (234:1)
	(54:0.81) -- ++(94:0.3)
	(-18:0.81) -- ++(202:0.3)
	(126:0.81) -- ++(86:0.3)
	(198:0.81) -- ++(-22:0.3)
	(0.2,-0.81) -- ++(0,0.3)
	(-0.2,-0.81) -- ++(0,-0.3);
\draw[line width=1.2]
	(90:1) -- (18:1) -- (-54:1);
\draw[dashed]
	(234:1) -- (-54:1);

\node at (90:0.75) {\small $\alpha$};
\node at (234:0.75) {\small $\beta$};
\node at (-54:0.75) {\small $\gamma$};
\node at (162:0.75) {\small $\delta$};
\node at (18:0.75) {\small $\epsilon$};

\end{scope}

\begin{scope}[yshift=-1.5cm]

\draw
	(90:1) to[out=174, in=46] 
	(162:1) to[out=-62, in=68] (234:1);
\draw[line width=1.2]
	(90:1) to[out=-56, in=84] 
	(18:1) to[out=-158, in=52] (-54:1);
\draw[dashed]
	(234:1) to[out=-40, in=140] (-54:1);

\end{scope}

\end{scope}

\end{tikzpicture}
\caption{Geometrically congruent tiling for $f=24$.}
\label{geomtiling}
\end{figure}

However, for angle congruent tilings, there is no reason we should restrict ourselves to straight line edges. In fact, Heesch and Kienzle \cite{hk} studied curvilinear polygons that can tile the plane. On the lower right of Figure \ref{geomtiling}, we describe the curvilinear pentagon that can be the tile in the pentagonal subdivision. The key point is that the pentagon has ``side neighborhoods'' along each of its five edges. Two such side neighborhoods are on the two sides of the identified (normal) $\alpha\delta$- and $\beta\delta$-edge. From the viewpoint of the pentagon, the $\beta\delta$-side neighborhood is the outside of the $\alpha\delta$-side neighborhood. Moreover, the $\delta$-ends of the two sides are matched. The same happens to the (thick) $\alpha\epsilon$- and $\beta\epsilon$-side neighborhoods. 

As for the (dashed) $\beta\gamma$-edge, we see two copies of the $\beta\gamma$-side neighborhood are matched. This self matching means the $\beta\gamma$-edge must have the $180^{\circ}$-rotation symmetry. 

We indicate the matchings of the edge pairs by adding small edge markers. Note that the marker on the $\beta\gamma$-edge by two small edges represent the $180^{\circ}$-rotation symmetry, and the markers on the other edges indicate no symmetry. Then the tiling on the left must also carry the markers in compatible way. Of course, the edges may have more symmetry than indicated by the markers, with straight being the most symmetric. However, this does not change the pentagonal subdivision tiling being the only tiling for the AVC \eqref{24a}.

The discussion about the geometric congruent tiling, including curvilinear tiles, also applies to the pentagonal subdivision of the icosahedron derived from the AVC \eqref{60a}. 

The discussion also applies to the pentagonal subdivision of the tetrahedron on the first of Figure \ref{12angletiling}, provided the angles are located as in the picture. We remark that the two $\alpha\delta$-edges are not required to be isometric. We only need $\beta\delta$-edge to match (one) connected $\alpha\delta$-edge, and $\gamma\delta$-edge to match (another) connected $\alpha\delta$-edge. 

If we exchange $\beta,\gamma$ along some dashed edges, then this may cause the matching of $\beta\delta$-edge to the non-connected $\alpha\delta$-edge, and the matching of $\gamma\delta$-edge to the non-connected $\alpha\delta$-edge. Such matching is described by the first of Figure \ref{geomtiling2}. However, by \cite[Proposition 7]{gsy} and \cite[Lemma 9]{wy1}, there is no tiling for such tile, unless the normal and thick edges are also matched. Therefore both the matching in the first of Figure \ref{geomtiling2} and the matching in Figure \ref{geomtiling} should happen. This means the pentagon is the second of Figure \ref{geomtiling2}. The pentagon appears to be symmetric. But we do not need to have $\beta=\gamma$ because the $\beta\gamma$-edge is only required to be $180^{\circ}$-rotation symmetric. 

\begin{figure}[htp]
\centering
\begin{tikzpicture}[>=latex]

\draw
	(90:1) -- (162:1)
	(18:1) -- (-54:1);
\draw[line width=1.2]
	(90:1) -- (18:1)
	(162:1) -- (234:1);
\draw[dashed]
	(234:1) -- (-54:1);

\node at (90:0.75) {\small $\alpha$};
\node at (234:0.75) {\small $\beta$};
\node at (-54:0.75) {\small $\gamma$};
\node at (162:0.75) {\small $\delta$};
\node at (18:0.75) {\small $\delta$};

\begin{scope}[xshift=3cm]

\draw
	(90:1) -- (162:1) -- (234:1)
	(90:1) -- (18:1) -- (-54:1)
	(54:0.81) -- ++(94:0.3)
	(-18:0.81) -- ++(202:0.3)
	(126:0.81) -- ++(86:0.3)
	(198:0.81) -- ++(-22:0.3)
	(0.2,-0.81) -- ++(0,0.3)
	(-0.2,-0.81) -- ++(0,-0.3);
\draw[dashed]
	(234:1) -- (-54:1);

\node at (90:0.75) {\small $\alpha$};
\node at (234:0.75) {\small $\beta$};
\node at (-54:0.75) {\small $\gamma$};
\node at (162:0.75) {\small $\delta$};
\node at (18:0.75) {\small $\delta$};

\end{scope}

\begin{scope}[xshift=6cm]

\foreach \a in {1,-1}
\foreach \b in {1,-1}
{
\begin{scope}[yshift=0.81*\b cm, yscale=\b, xscale=\a]

\draw
	(90:1) to[out=-56, in=84] 
	(18:1) to[out=-158, in=52] (-54:1);
\fill
	(18:1) circle (0.05);
		
\end{scope}	
}
	
\draw[yshift=0.81cm, dashed]
	(234:1) to[out=-40, in=140] (-54:1);

\end{scope}

\begin{scope}[xshift=9cm]

\draw[dotted]
	(-0.8,-0.8) rectangle (0.8,0.8);
	
\foreach \a in {1,-1}
\foreach \b in {1,-1}
{
\begin{scope}[xscale=\a, yscale=\b]

\draw
	(0.5,0) -- (0.8,0.8) -- (0,1.1);

\fill 
	(0.8,0.8) circle (0.05);
	
\end{scope}
}

\draw[dashed]
	(0.5,0) -- (-0.5,0);

\draw[gray]
	(0.8,0.8) -- (1.6,0.5) -- (2.4,0.8) -- (2.7,0) -- (2.4,-0.8) -- (1.6,-0.5) -- (0.8,-0.8);

\draw[gray,dashed]
	(1.6,0.5) -- (1.6,-0.5);

\end{scope}

\end{tikzpicture}
\caption{Exchange of $\beta,\gamma$ in the dodecahedron tiling.}
\label{geomtiling2}
\end{figure}

The whole tiling is the tiling of the pairs obtained by glueing two tiles along the $\beta\gamma$-edge. The general picture for the pair is the third of Figure \ref{geomtiling2}. The whole tiling is actually obtained by gluing six pairs together. In fact, each pair is the modification of a face of the regular cube, with four $\bullet$ vertices matching the four corners of the square face. The fourth of Figure \ref{geomtiling2} gives the scheme for the construction, which also appears in \cite[Figure 55]{cly4}.

\end{document}